\documentclass[12pt]{article}


\usepackage{times} 


\usepackage[a4paper,text={128mm,185mm}, centering]{geometry}

\usepackage{changepage}

\usepackage{titlesec}

\titleformat{\section}[hang]%
{\bfseries\large}{\thesection.}{1ex}{}%

\titleformat{\subsection}[hang]%
{\bfseries}{\thesubsection}{1ex}{}%


\usepackage{amsthm}
\usepackage{fancyhdr}
\pagestyle{fancy}

\theoremstyle{plain} 
\newtheorem{theorem}{Theorem}[section]
\newtheorem{thm} [theorem] {Theorem}

\newtheorem{prop}[theorem] {Proposition}

\newtheorem{df}[theorem]{Definition}

\newtheorem{rmq}[theorem] {Remark}

\theoremstyle{definition} 
\newtheorem{example}[theorem]{Example}
\newtheorem{exm}[theorem]{Example} 



\usepackage[utf8]{inputenc}

\usepackage{amssymb,amsmath, amsfonts}
\usepackage{mathabx}

\usepackage{MnSymbol}

\usepackage{cjhebrew} 


\newcommand*{\twosquig}{%
\mathrel{\vcenter{\offinterlineskip
\hbox{$\rightsquigarrow$}\vskip-.08ex\hbox{$\rightsquigarrow$}}}}

\usepackage{tocloft}

\addtocontents{toc}{\cftpagenumbersoff{chapter}}
\addtocontents{toc}{\cftpagenumbersoff{section}}
\addtocontents{toc}{\cftpagenumbersoff{subsection}}


\lhead{\sc\bfseries S. Dugowson}
\rhead{\sc\bfseries Open dynamics}

\title{\vskip 5pt  \bf INTERACTING OPEN DYNAMICS}
\author{\itshape\bfseries {St\'{e}phane DUGOWSON}}
\date{}

\begin{document}
\maketitle

\begin{small}
 {Article to be published in the fall 2020 issue of the \emph{Cahiers de Topologie et Géométrie Différentielle Catégoriques}, vol LXI-4. NB: The pagination of the two versions differs.}
 \end{small}

\vskip 25pt

\begin{adjustwidth}{0.5cm}{0.5cm}
{\small
{\bf R\'esum\'e.} Cet article présente les concepts de base d'une théorie systémique de l'interaction entre des dynamiques ouvertes non déterministes à temporalités variées. Elle comporte trois niveaux : la définition de ces dynamiques en tant que lax-foncteurs, la notion d'interaction --- qui fait appel à des notions de {requêtes}, de {synchronisations} et de {modes sociaux} --- et enfin l'engendrement de dynamiques globales ouvertes. L'aspect connectif des interactions est abordé, mais les autres aspects connectifs sont renvoyés à des travaux ultérieurs. \\
{\bf Abstract.} This paper  
presents the basic concepts of a systemic theory of interaction between non-deterministic open dynamics with varying temporalities, which includes three stages: the definition of these dynamics as lax-functors, the notion of interaction --- which uses some notions of requests, synchronizations and social modes (privacy)  --- and finally the generation of open global dynamics. Some connectivity structures of an interaction are defined, but the other aspects of dynamical connectivity  are left to further work.\\
{\bf Keywords.}  Open Dynamics. Systemic. Interactivity. Lax functors. Categories. Complex Systems. Connectivity.\\
{\bf Mathematics Subject Classification (2010).} 18A25,  
18B10,  
37B99, 
54A05, 
54H20. 
}
\end{adjustwidth}

\section*{Introduction}\addcontentsline{toc}{section}{Introduction}

This article  
presents in English the fundamental concepts of our theory of interactivity between some open dynamics defined as kind of lax-functors to some $2$-categories of sets with families of non-deterministic transitions as $1$-cells. The origin of this work is linked to our research on connectivity structures \cite{Dugowson:201012}, since connectivity has proven to be essentially dynamic in nature. In 2009, we  began to study from a connectivity point of view some dynamics that were not necessarily deterministic, with durations taken in an arbitrary monoid. 
During a lecture in 2010 on these issues, an oral remark by Mme Andrée Ehresmann suggested that any small categories should be taken as duration systems. On this occasion, she mentioned her 1965 paper \cite{Bastiani_Ehresmann:196505}, where under the name of \emph{guidable systems} she considers kinds of deterministic influenceable dynamical systems based on temporalities defined by small topological categories. 
At the end of the first section of the present paper, we precise some relations between this notion of ``guidable systems" due to Mme Ehresmann and the one we developed on our side after the question of the interaction between our own non-deterministic dynamics based on various temporalities arose. 
In the course of our research on interactivity, we thus first considered open dynamics as defined by some functors said to be \emph{disjunctive} and we sought to construct the global dynamics generated by the interaction of families of such open dynamics. The problem was to recognize that functors were insufficient because of a kind of instability: global dynamics were not always functors. We then had to extend our definitions to what we first called sub-functors (\cite{Dugowson:20150807, Dugowson:20150809,Dugowson:20160831}), before Mathieu Anel and then Mme Ehresmann invited us to reformulate our definitions in terms of lax-functors \cite{Dugowson:20180527}.
 Thanks to the lax-functorial stability theorem, presented at the beginning of the section \ref{sec Dyn Produced}, ``Global Dynamics", we obtain a systemic theory  where the dynamics generated by interactive families can in turn interact.

After this introduction and details on our notations and  the $2$-categories  used in the paper, there are three sections: 
\begin{itemize}
\item in the first section, we define what we call \emph{open dynamics} thanks  to notions of \emph{multi-dynamics}, \emph{mono-dynamics}, \emph{clocks},  and morphisms between them. We also define some \emph{parametric quotients} that are used in the third section. We then give a number of examples and, finally, we briefly describe some of the relations between our dynamics and Mme Ehresmann's guidable systems,
\item in the second section we define precisely what we mean by an \emph{interaction}. This is the only part of the article where we discuss connectivity structures, leaving the other connectivity aspects of dynamical interactivity to further work,
\item finally, in the third section, the lax-functional stability theorem makes it possible to associate a number of global dynamics with a given interactive family, and we conclude the paper with two examples.
\end{itemize}

\subsection*{Notations and $2$-categories at stake}\label{subs* notations}\addcontentsline{toc}{subsection}{Notations and $2$-categories at stake}

\subsubsection*{Functions}

The canonical inclusion $\emptyset \hookrightarrow \mathbf{R}$, that is the only real function defined on the empty set $\emptyset$, is denoted by $\underline{\emptyset}$. The restriction of a function $f$ on a subset $D\subseteq \mathbf{R}$ is denoted $f_{\vert D}$.
For any integer $k\in\mathbf{N}$, we set $\mathcal{C}^k=\bigcup_{D\in \mathcal{I}_\mathbf{R}}{\mathcal{C}^k(D)}$, where $\mathcal{I}_\mathbf{R}$ is the set of open real intervals and, for each interval $D$, $\mathcal{C}^k(D)$ is the set of real functions of class $\mathcal{C}^k$ defined on it\footnote{If $D$ is a singleton, we consider $\mathcal{C}^k(D)$ as the set of constant functions, \emph{i.e.} $\mathcal{C}^k(D)\simeq\mathbf{R}$.}. The set $\mathcal{C}^0$ of all continuous real functions defined on open real intervals is also denoted by $\mathcal{C}$.
Note that $\mathcal{C}^k(\emptyset)=\{\underline{\emptyset}\}\neq\emptyset$. 
For each interval $D\subseteq \mathbf{R}$, the set of metric maps  $D\rightarrow \mathbf{R}$, that is the set of real Lipschitz maps with Lipschitz constant $1$, is denoted $Lip_1(D)$.

\subsubsection*{Categories}

As usual in our papers, for any category $\mathbf{D}$, we denote by $\dot{\mathbf{D}}$ the class of its objects, and $\overrightarrow{\mathbf{D}}$ the class of its arrows. For every arrow $h$, we denote by $\mathrm{dom}(h)$ its source object (or \emph{domain}) and by $\mathrm{cod}(h)$ its target object (or \emph{codomain}).
$\mathbf{1}= (\bullet)$ is the terminal category, which has only one arrow $Id_\bullet$ that is also denoted by $\overrightarrow{0}$. The category of sets is denoted by $\mathbf{Sets}$. The discrete $2$-category associated with any category $\mathbf{D}$ is again denoted as $\mathbf{D}$.

\subsubsection*{Transitions}\label{subsubs* transitions}

For any sets $U$ and $V$, we define a \emph{transition from $U$ to $V$} as a map $U\rightarrow\mathcal{P}(V)$ or, equivalently, as a binary relation $U \rightarrow V$. We often write $\varphi:U\rightsquigarrow V$ to indicate that $\varphi$ is such a transition with $U=\mathrm{dom}(\varphi)$ and $V=\mathrm{cod}(\varphi)$. The \emph{domain of definition} of $\varphi$ is defined by  $\mathrm{Def}_\varphi:=\{u\in U, \varphi(u)\neq\emptyset\}$.
Denoted by $\psi\odot \varphi$, the composition of transitions $\varphi:U\rightsquigarrow V$ and $\psi:V\rightsquigarrow W$  is defined for all $u\in U$ by $\psi\odot \varphi (u)=\bigcup_{v\in \varphi(u)}\psi(v)\subseteq W$.
A transition $f:U\rightsquigarrow V$ is said to be \emph{hyper-deterministic}%
\footnote{In our previous texts, these transitions  were called ``quasi-deterministic", but the expression ``hyper-deterministic" is more coherent with the constraint order defined below.}
 if $card(f(u))\leq 1$ for all $u\in U$. In this case, it is often  considered  as a partial function, and we denote it by writing $f:U \dashrightarrow V$. In particular, if $card(f(u))=1$ for all $u\in U$, it is said to be \emph{deterministic} and it is considered and denoted as a total function  $f:U \rightarrow V$.

\subsubsection*{The $2$-categories $\mathbf{Tran}$ and $\mathbf{ParF}$}\label{subsubs* 2-categoris}

We  denote by $\mathbf{Tran}$\label{notation Tran}  the $2$-category that has sets as objects, transitions as arrows with the composition defined above, and such that for each couple of sets $(U,V)$ the category $\mathbf{Tran}(U,V)$ is  given by ordering the set of  {transitions} $U\rightsquigarrow V$  by  \emph{the constraint order}  defined  for all $\varphi, \psi \in \mathbf{Tran}(U,V)$
by
\[\varphi\leq\psi\Leftrightarrow \varphi\supseteq \psi\]
where $\varphi\supseteq \psi$ means that for all $u\in U$, $\varphi(u)\supseteq\psi(u)$. If $\varphi\leq\psi$, we say that $\psi$ is more \emph{constraining} than $\varphi$, or that $\varphi$ is \emph{laxer} than  $\psi$. Thus, there exists a $2$-cell $\varphi \Rightarrow\psi$ if and only if $\psi$ is more constraining than  $\varphi$.

We'll denote by $\mathbf{ParF}$\label{notation ParF} the sub-$2$-category of $\mathbf{Tran}$ obtained by keeping all sets as objects and, as $1$-cells, only the hyper-deterministic transitions between them, that is partial functions. Thus we have these inclusions of $2$-categories:
\[
\mathbf{Sets}\subseteq\mathbf{ParF}\subseteq\mathbf{Tran}.
\]

Given any small category $\mathbf{D}$, we write $\alpha:\mathbf{D}\rightharpoondown\mathbf{Tran}$ to indicate that $\alpha$ is a lax-functor from the discrete $2$-category $\mathbf{D}$  to $\mathbf{Tran}$.
Instead of $\alpha(S) \stackrel{\alpha(d)}{\rightsquigarrow}\alpha(T)$, the image of a $\mathbf{D}$-arrow $S\stackrel{d}{\rightarrow}T$ by $\alpha$ is denoted by $S^\alpha\stackrel{d^\alpha}{\rightsquigarrow}T^\alpha$ .

\begin{rmq}\label{rmq terminology TRAN instead of P} Of course, as a category, $\mathbf{Tran}$ coincides with $\mathbf{Rel}$, the category of sets with binary relations as arrows, but we prefer to emphasize the \emph{transition} point of view with this notation. In \cite{Dugowson:201112}, \cite{Dugowson:201203}, \cite{Dugowson:20150809} and \cite{Dugowson:20160831}, it was denoted $\mathbf{P}$ (for ``possible").
\end{rmq}

\begin{rmq}\label{rmq A nat isn't Nat}
Given $\alpha$ and $\beta$ two lax-functors from $\mathbf{D}$ to $\mathbf{Tran}$, we have to distinguish between 
the set --- denoted $\mathrm{nat}_{\mathbf{D}}(\alpha,\beta)$ or $\mathrm{nat}(\alpha,\beta)$ --- of all families of transitions  $(S^\alpha \stackrel{\delta_S}{\rightsquigarrow} S^\beta)_{S\in\dot{\mathbf{D}}}$ such that
\[\forall (S\stackrel{d}{\rightarrow}T)\in\overrightarrow{\mathbf{D}},
\delta_T\odot d^\alpha\subseteq d^\beta\odot \delta_S,
\]
and
the set $\mathrm{Nat}(\alpha,\beta)$\label{notation Nat} of lax-natural transformations from $\alpha$ to $\beta$.
Indeed, such a lax-natural transformation --- denoted by $\delta:\alpha\looparrowright\beta$ --- is defined not only by the data of the associated family ${(\delta_S)}_{S\in\dot{\mathbf{D}}}\in \mathrm{nat}(\alpha,\beta)$, but also by 
its domain $\alpha$, 
and its codomain $\beta$. To underline this nuance, we sometimes write 
\[\mathrm{Nat}(\alpha,\beta)=
\{(\alpha,\delta,\beta),\,
\delta\in \mathrm{nat}(\alpha,\beta)
\}.\]
For example, note that, if  $\alpha_1$, $\beta_1$, $\alpha_2$ and $\beta_2$ are some lax-functors $\mathbf{D}\rightharpoondown\mathbf{Tran}$ such that $(\alpha_1,\beta_1)\neq(\alpha_2,\beta_2)$ but that, for all $S\in\dot{\mathbf{D}}$, $S^{\alpha_1}=S^{\alpha_2}$ and $S^{\beta_1}=S^{\beta_2}$ then, because domains or codomains differ,
$\mathrm{Nat}(\alpha_1,\beta_1)\cap \mathrm{Nat}(\alpha_2,\beta_2)=\emptyset$, while we can have, and we will often have,
$\mathrm{nat}(\alpha_1,\beta_1)\cap \mathrm{nat}(\alpha_2,\beta_2)\neq\emptyset$.
\end{rmq}

\subsubsection*{Some $2$-categories of sets with $L$-families of transitions as arrows}\label{subsection TrL}

For any non-empty set $L$ we define a $2$-category denoted by 
$\mathbf{Tran}^{\underrightarrow{\scriptstyle{L}}}$ taking sets as $0$-cells  and, for each couple of sets $(U,V)$, the category $\mathbf{Tran}^{\underrightarrow{\scriptstyle{L}}}(U,V)$ being defined by 
\[
\mathbf{Tran}^{\underrightarrow{\scriptstyle{L}}}(U,V) =
(\mathbf{Tran}(U,V))^L.
\]
In other words, for a given domain $U$ and a given codomain $V$, a $1$-cell $\varphi$  in $\mathbf{Tran}^{\underrightarrow{\scriptstyle{L}}}$ is an $L$-family $(\varphi_\lambda)_{\lambda\in L}$ of transitions $\varphi_\lambda:U\rightsquigarrow V$. We  sometimes write 
$\varphi: U \twosquig_L  V $ or $ U {\stackrel{\varphi}{\twosquig}}_L V $
to indicate that $\varphi$ is such a family.

The composition of $1$-cells is naturally defined by 
\[\varphi\odot\psi
=(\varphi_\lambda)_{\lambda\in L}\odot (\psi_\lambda)_{\lambda\in L}
=(\varphi_\lambda\odot\psi_\lambda)_{\lambda\in L},\] 
and there is a $2$-cell $\varphi\Rightarrow\psi$ if and only if  $\varphi\leq \psi$, that is $\varphi_\lambda\supseteq \psi_\lambda$ for all 
$\lambda\in L$.

Similarly, we denote by $\mathbf{ParF}^{\underrightarrow{\scriptstyle{L}}}$  the sub-$2$-category of $\mathbf{Tran}^{\underrightarrow{\scriptstyle{L}}}$ obtained by keeping  sets as objects and, as $1$-cells, only the $L$-families of hyper-deterministic transitions between them, that is $L$-families of partial functions, 
and by $\mathbf{Sets}^{\underrightarrow{\scriptstyle{L}}}$  
the category of sets and, as arrows, $L$-families of total functions, so we have
\[
\mathbf{Sets}^{\underrightarrow{\scriptstyle{L}}}
\subseteq\mathbf{ParF}^{\underrightarrow{\scriptstyle{L}}}
\subseteq\mathbf{Tran}^{\underrightarrow{\scriptstyle{L}}}.
\]

As in the case when $L$ is a singleton, we  write 
$\alpha:\mathbf{D}\rightharpoondown\mathbf{Tran}^{\underrightarrow{\scriptstyle{L}}}$ 
to indicate that $\alpha$ is a lax-functor from the discrete $2$-category $\mathbf{D}$  to $\mathbf{Tran}^{\underrightarrow{\scriptstyle{L}}}$ and  the image of a $\mathbf{D}$-arrow $S\stackrel{d}{\rightarrow}T$ by such an $\alpha$ is 
 denoted by $S^\alpha{\stackrel{d^\alpha}{\twosquig}}_L T^\alpha$ 
 instead of $\alpha(S) {\stackrel{\alpha(d)}{{\twosquig}_L}}\alpha(T)$.

\section{Open dynamics}

\subsection{Multi-dynamics}

\subsubsection{$L$-dynamics on a category $\mathbf{D}$}\label{subsubs LdynD}

Let $L$ be a non-empty set, and $\mathbf{D}$  a small category.
A  lax-functor 
$\alpha:\mathbf{D}\rightharpoondown\mathbf{Tran}^{\underrightarrow{\scriptstyle{L}}}$
is said to be \emph{disjunctive} if for all objects $S\neq T$ in $\mathbf{D}$, we have
$S^\alpha \cap T^\alpha=\emptyset$.

\begin{df}[$L$-dynamics on $\mathbf{D}$] A \emph{multi-dynamic $\alpha$ on $\mathbf{D}$ with $L$ as set of parameter values}, or simply an
 \emph{$L$-dynamic 
on $\mathbf{D}$},  is a disjunctive lax-functor $\alpha:\mathbf{D}\rightharpoondown\mathbf{Tran}^{\underrightarrow{\scriptstyle{L}}}$.
\end{df}

For each $S\in\dot{\mathbf{D}}$, the elements of the set $S^\alpha$ are called the \emph{states of $\alpha$ of type $S$}, and we denote by $st(\alpha)$ the set $\bigsqcup_{S\in\dot{\mathbf{D}}}{S^\alpha}$ of all states of $\alpha$.
The category  $\mathbf{D}$ is called the \emph{engine} of $\alpha$, its arrows $(S\stackrel{d}{\rightarrow}T)\in\overrightarrow{\mathbf{D}}$ are called \emph{durations}. 

By definition of lax-functors between bicategories, 
an $L$-multi-dynamic $\alpha$ associates with each duration $(S\stackrel{d}{\rightarrow}T)\in\overrightarrow{\mathbf{D}}$ an $L$-family  of transitions $
d^\alpha=(d^\alpha_\lambda)_{\lambda\in L}:
S^\alpha \,
{\twosquig}_L 
T^\alpha$ 
such that, for each $\lambda\in L$ and any  composable arrows $R {\stackrel{d}{\longrightarrow}} S  {\stackrel{e}{\longrightarrow}} T$, we have
\begin{itemize}
\item (disjunctivity) $S\neq T \Rightarrow S^\alpha \cap T^\alpha=\emptyset$,
\item (lax identity) $(Id_S)^\alpha_\lambda \subseteq Id_{S^\alpha}$,
\item (lax composition) $(e\circ d)^\alpha_\lambda \subseteq e^\alpha_\lambda \odot d^\alpha_\lambda$.
\end{itemize}
 A state $u\in S^\alpha$ such that $(Id_S)^\alpha_\lambda(u)=\emptyset$ is said to be \emph{offside for the parameter value $\lambda\in L$}, and it is simply said to be \emph{offside} if it is offside for all parameter values. A state that is not offside is said to be \emph{onside}.
If the lax-functor $\alpha$ is in fact a functor $\mathbf{D} \rightarrow\mathbf{Tran}^{\underrightarrow{\scriptstyle{L}}}$, we  say that the multi-dynamic $\alpha$ is \emph{functorial} or \emph{strict}. 
An $L$-dynamic on $\mathbf{D}$ is said to be \emph{deterministic} (resp. \emph{hyper-deterministic}) if for each duration $d\in\overrightarrow{\mathbf{D}}$ and each parameter value $\lambda\in L$, the transition $d^\alpha_\lambda$ is deterministic (resp. hyper-deterministic). In other words, a deterministic $L$-dynamic on $\mathbf{D}$ is a disjunctive functor\footnote{Obviously, an $L$-dynamic that is deterministic is necessarily functorial.} $\mathbf{D}\rightarrow \mathbf{Sets}^{\underrightarrow{\scriptstyle{L}}}$, and a hyper-deterministic $L$-dynamic on $\mathbf{D}$ is a disjunctive lax-functor $\mathbf{D}\rightharpoondown\mathbf{ParF}^{\underrightarrow{\scriptstyle{L}}}$.

\begin{rmq} \label{rmq terminology multi-dynamiques} In \cite{Dugowson:20150809}, multi-dynamics were called \emph{multi-dynamiques sous-catégoriques} --- and \emph{multi-dynamiques catégoriques} in the functorial case --- whereas they were called \emph{multi-dynamiques sous-fonctorielles} in \cite{Dugowson:20160831}. 
\end{rmq}

\subsubsection{The category $\mathbf{MonoDyn}_\mathbf{D}$ of mono-dynamics on $\mathbf{D}$}\label{subsubs mono-dynamics}

In the particular case where $L$ is a singleton $\{*\}$, an $L$-dynamic is called a \emph{mono-dynamic} (or simply a \emph{dynamic}) on $\mathbf{D}$. 
Taking lax-natural transformations between mono-dynamics on $\mathbf{D}$ as morphisms, we obtain the category\footnote{\label{fnote terminology monodynamics}In \cite{Dugowson:20150809}, mono-dynamics were called \emph{dynamiques sous-catégoriques} --- and \emph{dynamiques catégoriques} in the functorial case --- and the category $\mathbf{MonoDyn}_\mathbf{D}$ was denoted by $\mathbf{DySC_{(D)}}$. In \cite{Dugowson:20160831}, they were called \emph{mono-dynamiques sous-fonctorielles}.}
$\mathbf{MonoDyn}_\mathbf{D}$ of mono-dynamics on $\mathbf{D}$. These morphisms are called \emph{dynamorphisms}, and we  write $\delta:\alpha\looparrowright\beta$ to indicate that $\delta$ is a dynamorphism from $\alpha$ to $\beta$. Such
a dynamorphism $\delta\in \mathbf{MonoDyn}_\mathbf{D}(\alpha,\beta)$ is said to be \emph{deterministic} (resp. \emph{hyper-deterministic}) iff all transitions $\delta_S$ are deterministic (resp. hyper-deterministic).

\begin{rmq}\label{rmq naturality}
Following the remark \ref{rmq A nat isn't Nat}, we have to distinguish between $\mathrm{nat}(\alpha,\beta)$ and $\mathbf{MonoDyn}_\mathbf{D}(\alpha,\beta)=\mathrm{Nat}(\alpha,\beta)$. Nevertheless, as long as there is no ambiguity, we shall denote as usual by a same letter  a lax-natural transformation $\delta$ and the corresponding family $\delta=(\delta_S)_{S\in\dot{\mathbf{D}}}$ of transitions. Also note that disjunctivity of $\alpha$ implies that a family of transitions $\left(\delta_S:S^\alpha\rightsquigarrow S^\beta\right)_{S\in\dot{\mathbf{D}}}$ can be seen as a single transition 
$\delta:
st(\alpha)\rightsquigarrow st(\beta)$ 
with, for each $S\in\dot{\mathbf{D}}$ and each $u\in S^\alpha$, 
$\delta(u)=\delta_S(u)\subseteq S^\beta\subseteq st(\beta)$. 
Then, a dynamorphism $\delta:\alpha\looparrowright\beta$ 
is often seen as such a transition 
$st(\alpha)\rightsquigarrow st(\beta)$.
\end{rmq}

\subsubsection{Clocks on $\mathbf{D}$}\label{subsubs clocks}

\begin{df}
A monodynamic $\mathbf{h}$ on $\mathbf{D}$ that is deterministic is called a \emph{clock} on $\mathbf{D}$. Its states are called $\mathbf{h}-$\emph{instants} (or simply \emph{instants}).
\end{df}

Thus, a clock on $\mathbf{D}$ is nothing but a disjunctive functor $\mathbf{D}\rightarrow \mathbf{Sets}$. A pre-order relation, called \emph{anteriority} and denoted by $\leq_\mathbf{h}$, is defined on  $st(\mathbf{h})$  by 
\[
(s\leq_\mathbf{h} t )
\Leftrightarrow
(\exists e\in \overrightarrow{\mathbf{D}}, e^\mathbf{h}(s)=t)\] for all instants $s$ and $t$. 
We define the category $\mathbf{Clocks_D}$ of clocks on $\mathbf{D}$ taking deterministic dynamorphisms as morphisms between them. It is equivalent to the topos of presheaves on $\mathbf{D}^{op}$.

\subsubsection{The category $L-\mathbf{Dyn}_\mathbf{D}$ of $L$-dynamics on $\mathbf{D}$}

 We denote by $L-\mathbf{Dyn}_\mathbf{D}$ the category whose objects are $L$-dynamics on $\mathbf{D}$, and with  arrows $\delta:\alpha\looparrowright\beta$ --- called \emph{$({\mathbf{D}, L})$-dynamorphisms} --- given by the families of transitions 
 $(S^\alpha \stackrel{\delta_S}{\rightsquigarrow} S^\beta)_{S\in\dot{\mathbf{D}}}$
  that are lax-natural from the mono-dynamic $\alpha_\lambda$ to the mono-dynamic $\beta_\lambda$ for all $\lambda\in L$, that is such that
\[\forall\lambda\in L, \forall (S\stackrel{d}{\rightarrow}T)\in\overrightarrow{\mathbf{D}},
\delta_T\odot d^\alpha_\lambda\subseteq d^\beta_\lambda\odot \delta_S.
\]
Following the remark \ref{rmq A nat isn't Nat},  we can formulate this by writing
\[
L-\mathbf{Dyn}_\mathbf{D}(\alpha,\beta)=
\{
(\alpha,\delta,\beta), \delta\in \bigcap_{\lambda\in L} \mathrm{nat}(\alpha_\lambda,\beta_\lambda)
\}\]
or even, with the usual omission of domain and codomain when there is no ambiguity, by
$
L-\mathbf{Dyn}_\mathbf{D}(\alpha,\beta)=\bigcap_{\lambda\in L} \mathrm{nat}(\alpha_\lambda,\beta_\lambda)$.

\subsubsection{The category $\mathbf{MultiDyn}_\mathbf{D}$ of multi-dynamics on $\mathbf{D}$}

Let $L$ and $M$ be some non-empty sets, and  $\alpha:\mathbf{D}\rightharpoondown 
\mathbf{Tran}^{\underrightarrow{\scriptstyle{L}}}$ 
and 
$\beta:\mathbf{D}\rightharpoondown 
\mathbf{Tran}^{\underrightarrow{\scriptstyle{M}}}$
be  multi-dynamics on $\mathbf{D}$. 
\begin{df}
A \emph{$\mathbf{D}$-dynamorphism} $\alpha\looparrowright\beta$ 
is a couple $(\theta,\delta)$ 
with $\theta:L\rightarrow M$ a map, 
and\footnote{Using the notation explained in the remark \ref{rmq A nat isn't Nat}.}
 $\delta\in\bigcap_{\lambda\in L} 
\mathrm{nat}_\mathbf{D}(\alpha_\lambda,\beta_{\theta(\lambda)})$.
\end{df}

 Thus, to be a dynamorphism, $(\theta,\delta)$ must satisfy the lax-naturality condition
\[\forall \lambda\in L,\forall (S\stackrel{d}{\rightarrow}T)\in\overrightarrow{\mathbf{D}}, \delta_T\odot d^\alpha_\lambda\subseteq d^\beta_{\theta(\lambda)}\odot \delta_S.\]

We then obtain the category $\mathbf{MultiDyn}_{\mathbf{D}}$ taking as objects all  multi-dynamics on $\mathbf{D}$, and as arrows all $\mathbf{D}$-dynamorphisms between them. Naturally, such  a dynamorphism $(\theta,\delta)$ is said to be \emph{(hyper-)deterministic} if for every object $S\in\dot{\mathbf{D}}$, $\delta_S$ is (hyper-)deterministic.

\begin{rmq} For any set $L$ with $card(L)\geq 2$, $L-\mathbf{Dyn}_\mathbf{D}$ is a non-full subcategory of $\mathbf{MultiDyn}_{\mathbf{D}}$, whereas  
$\mathbf{MonoDyn}_\mathbf{D}$ is a full one.  We can in particular consider dynamorphisms between mono-dynamics on $\mathbf{D}$ and multi-dynamics on $\mathbf{D}$. For example, if  $L$ is a non-empty set, $\alpha$ an $L$-dynamic on   $\mathbf{D}$, and $\mathbf{h}$ a clock on the same engine, then a dynamorphism $\mathfrak{s}:\mathbf{h}\looparrowright \alpha$ is a couple
$\mathfrak{s}=(\lambda,\sigma)$ with $\lambda\in L$ and $\sigma=(S^\mathbf{h} \stackrel{\sigma_S}{\rightsquigarrow} S^\alpha)_{S\in\dot{\mathbf{D}}}$ such that 
\[\forall (S\stackrel{d}{\rightarrow}T)\in\overrightarrow{\mathbf{D}},
\sigma_T\odot d^\mathbf{h}\subseteq d^\alpha_\lambda\odot \sigma_S, 
\] whereas a dynamorphism $\tau:\alpha\looparrowright \mathbf{h}$ is a family of transitions $\tau=(S^\alpha \stackrel{\tau_S}{\rightsquigarrow} S^\mathbf{h})_{S\in\dot{\mathbf{D}}}$ such that
\[\forall (S\stackrel{d}{\rightarrow}T)\in\overrightarrow{\mathbf{D}}, \forall \lambda\in L,
\tau_T\odot d^\alpha_\lambda\subseteq d^\mathbf{h}\odot \tau_S.
\]
\end{rmq}

\subsubsection{The category $\mathbf{MultiDyn}$ of multi-dynamics}

Let $\alpha:\mathbf{D}\rightharpoondown 
\mathbf{Tran}^{\underrightarrow{\scriptstyle{L}}}$ 
and 
$\beta:\mathbf{E}\rightharpoondown 
\mathbf{Tran}^{\underrightarrow{\scriptstyle{M}}}$
be  multi-dynamics with possibly different sets of parameter values and different engines.

\begin{df} A \emph{dynamorphism} $\alpha\looparrowright\beta$  consists, in addition to the data of $\alpha$ and $\beta$, of that of a triple $(\theta,\Delta,\delta)$ 
with 
\begin{itemize}
\item $\theta:L\rightarrow M$ a map, 
\item $\Delta:\mathbf{D}\rightarrow\mathbf{E}$ a functor,
\item $\delta\in\bigcap_{\lambda\in L}\mathrm{nat}_\mathbf{D}(\alpha_\lambda,\beta_{\theta(\lambda)}\circ\Delta)$.
\end{itemize}
\end{df}

The last condition means that the lax-naturality condition
\[\forall \lambda\in L,\forall (S\stackrel{d}{\rightarrow}T)\in\overrightarrow{\mathbf{D}}, \delta_T\odot d^\alpha_\lambda\subseteq (\Delta d)^\beta_{\theta(\lambda)}\odot \delta_S\]
has to be satisfied. 
The category $\mathbf{MultiDyn}$ of multi-dynamics is then defined taking as objects all multi-dynamics, and as arrows all dynamorphisms between them. The full subcategory of  $\mathbf{MultiDyn}$ obtained taking mono-dynamics (resp. clocks) as objects is denoted by $\mathbf{MonoDyn}$ (resp. $\mathbf{Clocks}$). In general, $\mathbf{MonoDyn}_\mathbf{D}$ (resp.   $\mathbf{Clocks}_\mathbf{D}$) is a non-full subcategory of $\mathbf{MonoDyn}$ (resp.   $\mathbf{Clocks}$).

\subsection{Open dynamics: definition, realizations, quotients}

\subsubsection{Definition of open dynamics}

\begin{df}\label{df open dynamics}  An \emph{open dynamic $A$ with engine $\mathbf{D}$} is the data
\[A=\left((\alpha:\mathbf{D}\rightharpoondown \mathbf{Tran}^{\underrightarrow{\scriptstyle{L}}}) \stackrel{\rho}{\looparrowright}  \mathbf{h}\right)\] of
\begin{itemize}
\item a non-empty set $L$ of parameter values,
\item an $L$-dynamic $\alpha\in L-\mathbf{Dyn}_\mathbf{D}$,
\item a clock $(\mathbf{h}:\mathbf{D}\rightarrow \mathbf{Sets})\in \mathbf{Clocks}_{\mathbf{D}}$,
\item a \emph{deterministic} dynamorphism 
$
{\rho}\in \mathbf{MultiDyn}_{\mathbf{D}}(\alpha,\mathbf{h})
$ called \emph{datation}.
\end{itemize}
\end{df}

An open dynamic with engine $\mathbf{D}$ is also called an open dynamic \emph{on $\mathbf{D}$}.
An open dynamic is said to be \emph{intemporal} if its engine is the terminal category $\mathbf{1}$. The states of $\alpha$ are also called the states of $A$, thus we set: $st(A)=st(\alpha)$.
If the parametric set $L$ is a singleton, $A$ is said to be an \emph{open mono-dynamic} or, sometimes, an \emph{opaque dynamic}.

\begin{rmq}
For each $\lambda\in L$, we  naturally denote by $A_\lambda$  the open mono-dynamic obtained by restricting parametric values to $\lambda$, that is 
\[A_\lambda=\left(
(\alpha_\lambda:\mathbf{D}\rightharpoondown \mathbf{Tran})
\stackrel{\rho}{\looparrowright}
(\mathbf{h}:\mathbf{D}\rightarrow \mathbf{Sets})
\right).\]
\end{rmq}

According to the definitions given in \textbf{§\,2.4.2} of \cite{Dugowson:20150809} and \textbf{§\,1.2.2} of \cite{Dugowson:20160831}, a \emph{dynamorphism} from an open dynamic
\[A=\left((\alpha:\mathbf{D}\rightharpoondown \mathbf{Tran}^{\underrightarrow{\scriptstyle{L}}}) \stackrel{\rho}{\looparrowright}  (\mathbf{h}:\mathbf{D}\rightarrow \mathbf{Sets})\right)\]
to an open dynamic
\[B=\left((\beta:\mathbf{E}\rightharpoondown \mathbf{Tran}^{\underrightarrow{\scriptstyle{M}}}) \stackrel{\tau}{\looparrowright}  (\mathbf{k}:\mathbf{E}\rightarrow \mathbf{Sets})\right)\]
is a quadruplet $(\theta, \Delta, \delta, \varepsilon)$ with
\begin{itemize}
\item $(\theta,\Delta,\delta)\in\mathbf{MultiDyn}(\alpha,\beta)$,
\item $(\Delta,\varepsilon)\in\mathbf{MonoDyn}(\mathbf{h},\mathbf{k})$,
\item this lax synchronization condition satisfied:
\[
\forall S\in\dot{\mathbf{D}}, 
\tau_{\Delta_S}\odot\delta_S\subseteq\varepsilon_S\odot\rho_S.
\]
\end{itemize}

We  denote by $\mathbf{ODyn}$ the category of all open dynamics, with dynamorphisms as arrows.

\subsubsection{Realizations of an open dynamic}
\label{subsubs realizations of an open dynamic}


Let $A=\left((\alpha:\mathbf{D}\rightharpoondown \mathbf{Tran}^{\underrightarrow{\scriptstyle{L}}}) \stackrel{\rho}{\looparrowright}  (\mathbf{h}:\mathbf{D}\rightarrow \mathbf{Sets})\right)$ be an open dynamic.

\begin{df}
A \emph{realization} (or a \emph{solution}) of $A$ is a hyper-deterministic dynamorphism $(\mathfrak{s}:\mathbf{h}\looparrowright\alpha)\in \mathbf{MultiDyn}_\mathbf{D}(\mathbf{h},\alpha)$ such that  the lax condition \[\rho\odot \mathfrak{s}\subseteq Id_\mathbf{h}\] be satisfied.
\end{df}

In other words\footnote{See \cite{Dugowson:20160831}, \textbf{§\,1.3.1.}},  a realization of $A$ is a couple $\mathfrak{s}=(\lambda,\sigma)$ with $\lambda\in L$ and  $\sigma:st(\mathbf{h})\dashrightarrow st(\alpha)$ a partial function defined on a subset $\mathrm{Def}_\sigma\subseteq st(\mathbf{h})$ such that:

\begin{enumerate}
\item $\forall t\in \mathrm{Def}_\sigma, \rho(\sigma(t))=t$,
\item $\forall S\in\dot{\mathbf{D}},\forall t\in S^\mathbf{h}\cap  \mathrm{Def}_\sigma, \sigma(t)\in S^\alpha$,
\item $\forall (S\stackrel{d}{\rightarrow}T)\in\overrightarrow{\mathbf{D}}$, 
 $\forall t\in S^\mathbf{h}$, 
 \[d^\mathbf{h}(t)\in \mathrm{Def}_\sigma\Rightarrow 
\left[ t\in \mathrm{Def}_\sigma 
\,\,\mathrm{and}\,\,
\sigma(d^\mathbf{h}(t))\in d^\alpha_\lambda (\sigma(t))\right].
\]
\end{enumerate}

The set of realizations of $A$ is denoted by $\mathfrak{S}_A$.
Given $\mathfrak{s}=(\lambda,\sigma)\in \mathfrak{S}_A$, we  call $\lambda$ the \emph{parametric part} or the \emph{incoming part} of this realization, $\sigma$ its \emph{outgoing part}, and we set 
\[
\mathrm{In}(\mathfrak{s}):=\lambda
\quad\mathrm{and}\quad
\mathrm{Out}(\mathfrak{s}):=\sigma.
\]

Outgoing parts of realizations of $A$ is often  called \emph{outgoing realizations of $A$} --- or even simply \emph{realizations}, if there is no ambiguity --- and their set is denoted by ${Z}_A$. 
 Thus, we have%
\footnote{For any $\lambda\in L$, the set of realizations of the open (mono) dynamic $A_\lambda$ is simply given by $\mathfrak{S}_{A_\lambda}=\{\lambda\}\times Z_{A_\lambda}$, so this latter set ${Z}_{A_\lambda}$ of outgoing parts of realizations of $A_\lambda$ is often simply called the set of its realizations.}
\[{Z}_A=\bigcup_{\lambda\in L}{Z}_{A_\lambda}.\]

A realization of $A$ is said to be empty if its outgoing part is the empty function $st(\mathbf{h})\supset \emptyset \hookrightarrow st(\alpha)$. This empty function is denoted by $\underline{\emptyset}_A$, or simply 
$\underline{\emptyset}$, if there is no ambiguity. We always have    ${Z}_A\ni\underline{\emptyset}_A$, and we  denote by $Z_A^*$ the set of non-empty outgoing realizations of $A$:
\[{Z}_A^*={Z}_A\setminus \{\underline{\emptyset}_A\}.\]

An open dynamic $A$ is said to be \emph{efficient} if the set ${Z}_A^*$ is non empty.

\paragraph{Realizations passing through a state.}
\label{subsubs realizations passing through a state}

\begin{df}
Given an open dynamic $A$, we say that  a realization $\mathfrak{s}=(\lambda,\sigma)$ of $A$ \emph{passes through} a state $a\in st(A)$ --- or equivalently that the outgoing part $\sigma$ of  $\mathfrak{s}$ passes through $a$ --- and we  write
\[\mathfrak{s}\rhd a\quad (\mathrm{or, equivalently:\sigma\rhd a}) \] 
if $\sigma(\rho(a))=a$.
\end{df}

More generally, if $E$ is a set of states of $A$, we  write
\[\mathfrak{s} \rhd E\quad (\mathrm{or, equivalently:}\,\sigma \rhd E)\]
to say that $\sigma$ passes through every $a\in E$. If $E$ is a finite set $E=\{a_1,..., a_n\}$, we can also write 
\[\sigma\rhd a_1,..., a_n.\]

\subsubsection{Parametric quotients}
\label{subsubs parametric quotient}

Let $\alpha:\mathbf{D}\rightharpoondown\mathbf{Tran}^{\underrightarrow{\scriptstyle{L}}}$  be a multi-dynamic with engine $\mathbf{D}$ and parametric set $L$.

\begin{prop}\label{prop quotient param}
If $\sim$ is an equivalence relation on $L$, and $M=L/{\sim}$ is the quotient set of $L$ by $\sim$, then the relation 
\[\forall \mu\in M, \beta_\mu=\bigcup_{\lambda\in\mu}\alpha_\lambda,\]
that is
\begin{itemize}
\item $\forall S\in\dot{\mathbf{D}}$,  $S^\beta=S^\alpha$,
\item  $\forall(e:S\rightarrow T)\in\overrightarrow{\mathbf{D}}, \forall a\in S^\beta, e^\beta_\mu(a)=\bigcup_{\lambda\in\mu}e^\alpha_\lambda(a)$,
\end{itemize}
defines a multi-dynamic $\beta$ on $\mathbf{D}$ with parametric set $M$.
\end{prop}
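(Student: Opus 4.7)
The plan is to verify in turn the three axioms required by the definition of an $M$-dynamic on $\mathbf{D}$, namely disjunctivity, lax identity, and lax composition. Since $\beta$ is defined pointwise by taking unions over equivalence classes of $\alpha_\lambda$'s, each axiom should reduce more or less directly to the corresponding axiom for $\alpha$, with the only subtlety being that the composition $\odot$ does not commute strictly with unions but behaves monotonically.

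First I would note that the assignment $S \mapsto S^\beta = S^\alpha$ on objects of $\mathbf{D}$ immediately gives disjunctivity: if $S \neq T$ then $S^\beta \cap T^\beta = S^\alpha \cap T^\alpha = \emptyset$ by disjunctivity of $\alpha$. For the lax identity, I would fix $\mu \in M$ and $a \in S^\beta = S^\alpha$, and compute
\[
(Id_S)^\beta_\mu(a) \;=\; \bigcup_{\lambda \in \mu} (Id_S)^\alpha_\lambda(a) \;\subseteq\; \bigcup_{\lambda \in \mu} \{a\} \;=\; \{a\} \;=\; Id_{S^\beta}(a),
\]
using the lax identity of $\alpha$ for each $\lambda \in \mu$.

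The main (and only slightly delicate) step is lax composition. Fix composable arrows $R \xrightarrow{d} S \xrightarrow{e} T$ in $\mathbf{D}$, a class $\mu \in M$, and $a \in R^\beta$. Using lax composition of $\alpha$ and the definition of $\beta$:
\[
(e \circ d)^\beta_\mu(a) \;=\; \bigcup_{\lambda \in \mu} (e \circ d)^\alpha_\lambda(a) \;\subseteq\; \bigcup_{\lambda \in \mu} \bigl(e^\alpha_\lambda \odot d^\alpha_\lambda\bigr)(a) \;=\; \bigcup_{\lambda \in \mu} \bigcup_{b \in d^\alpha_\lambda(a)} e^\alpha_\lambda(b).
\]
For each $\lambda \in \mu$ and $b \in d^\alpha_\lambda(a)$, one has $b \in d^\beta_\mu(a)$ and $e^\alpha_\lambda(b) \subseteq e^\beta_\mu(b)$, so
\[
\bigcup_{\lambda \in \mu} \bigcup_{b \in d^\alpha_\lambda(a)} e^\alpha_\lambda(b) \;\subseteq\; \bigcup_{b \in d^\beta_\mu(a)} e^\beta_\mu(b) \;=\; \bigl(e^\beta_\mu \odot d^\beta_\mu\bigr)(a),
\]
which establishes $(e \circ d)^\beta_\mu \subseteq e^\beta_\mu \odot d^\beta_\mu$.

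I do not expect any serious obstacle here: all three conditions follow by straightforward pointwise arguments, and the only subtle point worth emphasising is that taking unions over $\lambda$ is compatible with $\odot$ only via an inclusion (not an equality), which is exactly what is needed for the \emph{lax} composition axiom. Combined with disjunctivity and lax identity, this shows that $\beta$ is a disjunctive lax-functor $\mathbf{D} \rightharpoondown \mathbf{Tran}^{\underrightarrow{\scriptstyle{M}}}$, i.e.\ an $M$-dynamic on $\mathbf{D}$.
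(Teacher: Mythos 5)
Your proof is correct and follows essentially the same route as the paper: the lax identity and lax composition axioms for $\beta$ are reduced to those of $\alpha$ by taking unions over the equivalence class $\mu$, with the key observation that $\odot$ is monotone with respect to inclusion (you unwind this pointwise, the paper states it via $f^\alpha_\lambda\subseteq f^\beta_\mu$ and $g^\alpha_\lambda\subseteq g^\beta_\mu$). You also explicitly check disjunctivity, which the paper leaves implicit since $S^\beta=S^\alpha$.
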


\begin{proof}
For every $\mu\in L/{\sim}$, and each $S\in\dot{\mathbf{D}}$, we have\footnote{Where the order relation $\leq$  is of course the constraint order 
$\varphi\leq\psi\Leftrightarrow \varphi\supseteq \psi$.}
\[
(Id_S)^{\beta}_\mu=
\bigcup_{\lambda\in\mu}{(Id_S)^\alpha_\lambda}
\subseteq
\bigcup_{\lambda\in\mu}{Id_{S^\alpha}}=Id_{S^\beta},
\] that is $(Id_S)^{\beta}_\mu\geq Id_{S^\beta}$.

Furthermore,  for each couple of composable arrows 
$R {\stackrel{f}{\longrightarrow}} S  {\stackrel{g}{\longrightarrow}} T$ in $\mathbf{D}$, we have
\[
(g \circ f)^{\beta}_\mu
=
\bigcup_{\lambda\in\mu}{(g \circ f)}^{\alpha}_\lambda
\geq
\bigcup_{\lambda\in\mu}{\left( g^\alpha_\lambda \odot f^\alpha_\lambda\right)},\]
but for each $\lambda\in\mu$, we have $f^\alpha_\lambda \subseteq f^\beta_\mu$, and the same for $g$, and then 
\[{g^\alpha_\lambda \odot f^\alpha_\lambda}
\subseteq
 g^{\beta}_\mu \odot f^{\beta}_\mu,\] 
 so 
\[
(g \circ f)^{\beta}_\mu
\subseteq
g^{\beta}_\mu \odot f^{\beta}_\mu,
\] that is
\[
(g \circ f)^{\beta}_\mu
\geq
g^{\beta}_\mu \odot f^{\beta}_\mu.
\]
\end{proof}

\begin{df}[Parametric quotient of a dynamic]\label{df reduc param}
The multi-dynamic $\beta:\mathbf{D}\rightharpoondown\mathbf{Tran}^{\underrightarrow{\scriptstyle{M}}}$ defined in proposition \ref{prop quotient param} by
\[\forall \mu\in M, \beta_\mu=\bigcup_{\lambda\in\mu}\alpha_\lambda,\]
is called the \emph{parametric quotient of $\alpha$ by $\sim$} and is denoted by  $\beta=\alpha/{\sim}$. \\
For any open dynamic
 \[A=\left((\alpha:\mathbf{D}\rightharpoondown \mathbf{Tran}^{\underrightarrow{\scriptstyle{L}}}) \stackrel{\rho}{\looparrowright}  (\mathbf{h}:\mathbf{D}\rightarrow \mathbf{Sets})\right)\] and any equivalence relation $\sim$ on $L$, we define in the same way the \emph{quotient open dynamic} $B=A/\sim$ setting
\[{B}=\left(({(\alpha/\sim)}:\mathbf{D}\rightharpoondown \mathbf{Tran}^{\underrightarrow{(\scriptstyle{L}/\sim)}}) \stackrel{\tilde{\rho}}{\looparrowright}  (\mathbf{h}:\mathbf{D}\rightarrow \mathbf{Sets})\right)
\]
where, for every $b\in S^{\alpha/\sim}=S^\alpha$, $\tilde{\rho}(b)=\rho(b)$.
\end{df}

\subsection{Examples of open dynamics}


\begin{example}[Bushaw's dynamics]\label{exm Bushaw}
In her 1965 article \cite{Bastiani_Ehresmann:196505}, Andrée Bastiani (-Ehresmann) cited  Donald W. Bushaw's 1963 article \cite{Bushaw:19630423} in which this one introduced some continu\-ous \emph{dynamical  polysystems} that correspond --- leaving aside topological aspects  --- to our deterministic open dynamics with the group $(\mathbf{R},+)$ as engine and with clock the real \emph{existential clock}
\footnote{About the existential clock of a category, see \cite{Dugowson:201012}.}
\, $\xi=\xi_{(\mathbf{R},+)}$ (defined by $st(\xi)=\mathbf{R}$ and $d^\xi(t)=t+d$ for all reals $t$ and $d$):
\[(\alpha:(\mathbf{R},+) \rightarrow \mathbf{Sets}^{\underrightarrow{\scriptstyle{L}}}) \stackrel{\rho}{\looparrowright}  (\xi : (\mathbf{R},+)\rightarrow \mathbf{Sets})\]
\noindent such that the following additional condition (``non-anticipation'') be satisfied: for all $\lambda_1, {\lambda_2} \in L$ and  $t_0\in\mathbf{R}$ there exists a unique $\lambda\in L$ such that, for all states $s\in st(\alpha)$ with $\tau(s)=t_0$, we have 
\begin{itemize}
\item $\forall d\in\mathbf{R}_{-}, d^\alpha_\lambda(s)= d^\alpha_{\lambda_1}(s)$,
\item $\forall d\in\mathbf{R}_{+}, d^\alpha_\lambda(s)= d^\alpha_{{\lambda_2}}(s)$.
\end{itemize}

Thus, with each Bushaw's dynamical polysystem is canonically associated a deterministic open multi-dynamic on $\mathbf{R}$. Reciprocally, by choosing convenient topological structures on the set of states and on the set of parameter values, some Bushaw's dynamical polysystem(s) can be associated with each deterministic open multi-dynamic on $\mathbf{R}$ endowed with the existential clock $\xi$ and satisfying the ``non-anticipation'' property.

\paragraph{Realizations of Bushaw's dynamics.} $(\mathbf{R},+)$ being a group, every non\-empty outgoing realization of the considered deterministic open dynamic is defined on the whole real line and, with the topological assumptions of Bushaw's paper, it is necessarily continuous. For its part, Bushaw doesn't explicitly  define the realizations (solutions) of his systems. Nevertheless,
for each $\varphi\in L$, Bushaw denotes again by $\varphi$ the map $E\times \mathbf{R}\rightarrow E$, where $E=st(\alpha)$, defined with our notations by $\varphi(e,d)=d^\alpha_\varphi(e)$. Then, for each given state $e\in E$,  the map $\sigma:\mathbf{R}\ni t \mapsto \varphi(e,t-\tau(e))\in E$ constitutes the single realization of the considered deterministic dynamic such that $\sigma(\tau(e))=e$. It is defined on all $\mathbf{R}$, and it is continuous. Thus, for each $\lambda$, there is an implicitly notion of realization that coincides with ours, even if some notion of partial solution could perhaps be closer to the spirit of his work (because of the local aspect of the parameters $\lambda$).

\end{example}

\mbox{}


\begin{example}[$\Phi$, a  deterministic intemporal mono-dynamic]\label{exm Phi}
An intemporal dynamic is functorial if and only if it is deterministic, and in this case its behavior cannot depend on any parameter, since the image by the dynamic of the only duration $\overrightarrow{0}$ is necessarily the identity of the set of states. For example, we can consider the deterministic intemporal monodynamic $\Phi$ for which the set of states is $\{0,1\}$, that is
\[
\Phi=
\left((\phi:\mathbf{1}\rightarrow \mathbf{Sets}) \stackrel{!}{\looparrowright}  (\xi_\mathbf{1}:\mathbf{1}\rightarrow \mathbf{Sets})\right)
\]
where
\begin{itemize}
\item $st(\phi)=\bullet^\phi=\{0,1\}$, 
\item $\overrightarrow{0}^\phi=Id_{\{0,1\}}$,
\item $\xi_\mathbf{1}$ is the canonical clock\footnote{That is both the existential clock and the essential clock of $\mathbf{1}$. About the existential clock and the essential clock of a category, see \cite{Dugowson:201012}.} of $\mathbf{1}$, which has only one instant $0$, 
\end{itemize}
and $\phi \stackrel{!}{\looparrowright} \xi_\mathbf{1}$ is the necessarily constant dynamorphism.

\paragraph{Realizations of $\Phi$.} We immediately see that
\[
\mathfrak{S}_\Phi=Z_\Phi=\{\underline{\emptyset}_\Phi, 0, 1\}.
\]
\end{example}

\mbox{}
 

\begin{example}[$\Upsilon$, a one-step deterministic cell]\label{exm Upsilon}

We set
\[
\Upsilon=
\left((\upsilon:\mathbf{D}_\Upsilon\rightarrow \mathbf{Tran}^{\underrightarrow{\scriptstyle{L_\Upsilon}}}) \stackrel{!}{\looparrowright}  (\zeta_{\mathbf{D}_\Upsilon}:{\mathbf{D}_\Upsilon}\rightarrow \mathbf{Sets})\right)
\] where 
\begin{itemize}
\item $\mathbf{D}_\Upsilon=(T_0 \stackrel{d}{\rightarrow} T_1)\simeq (\bullet \rightarrow \bullet)$, the category with two objects and a single non-trivial arrow between them, which we  call the \emph{one-step category},
\item $\zeta_{\mathbf{D}_\Upsilon}$ is the \emph{essential clock}\footnote{See \cite{Dugowson:201012}.} of ${\mathbf{D}_\Upsilon}$, for which the set of instants associated with each $T_k$ is a singleton, say ${T_k}^{\zeta_{\mathbf{D}_\Upsilon}}=\{t_k\}$,
\item $\forall k\in\{0,1\}$, ${(T_k)}^\upsilon=\{t_k\}\times \{0,1\}$, 
\item $L_\Upsilon=\{0,1\}^{\{0,1\}}$,
\item $\forall \lambda\in L_\Upsilon$,
 $\forall k\in\{0,1\}$, $({Id_{(T_k)}})^\upsilon_\lambda=Id_{({(T_k)}^\upsilon)}$ (since ${\Upsilon}$ is functorial), 
\item $\forall \lambda\in L_\Upsilon$, $\forall s\in\{0,1\}$, $d^\upsilon_\lambda(t_0,s)=(t_1,\lambda(s))$
,
\item $\upsilon \stackrel{!}{\looparrowright} \zeta_{\mathbf{D}_\Upsilon}$ is the  unique possible deterministic dynamorphism here (since there is a unique instant for each temporal type 
$T_k\in\dot{\mathbf{D}_\Upsilon}$).
\end{itemize}

\paragraph{Realizations of $\Upsilon$.} An outgoing realization  of $\Upsilon$ can be identified with some partial function $\sigma:\{t_0,t_1\}\dashrightarrow\{0,1\}$ such that $\mathrm{Def}_\sigma\in\{\emptyset, \{t_0\}, \{t_0,t_1\}\}$. With this identification, we can write $\mathfrak{S}_\Upsilon$ as the set of all couples $(\lambda,\sigma)$
with $\lambda\in{L_\Upsilon}$ and
$\sigma=\underline{\emptyset}_\Upsilon$,
or
$\sigma\in\{0,1\}^{\{t_0\}}$,
or $\sigma\in\{0,1\}^{\{t_0,t_1\}}$ with $\sigma(t_1)=\lambda(\sigma(t_0))$. Then,
\[
Z_\Upsilon=
\{\underline{\emptyset}_\Upsilon\}
\cup
\{0,1\}^{\{t_0\}}
\cup
\{0,1\}^{\{t_0,t_1\}}.
\]
\end{example}

\mbox{}

\begin{example}[${\Upsilon_*}$, a one-step  hyper-deterministic cell]\label{exm Upsilon*}

This is a functorial hyper-deterministic variant of the example \ref{exm Upsilon}, keeping the same engine $\mathbf{D}_{\Upsilon_*}=\mathbf{D}_{\Upsilon}=(T_0 \stackrel{d}{\rightarrow} T_1)$, the same states and the same clock $\zeta=\zeta_{\mathbf{D}_\Upsilon}$ but including new parameter values to permit a state to ask to ``\emph{exit the game}''. More precisely,
\[
{\Upsilon_*}=
\left(({\upsilon_*}:\mathbf{D}_{\Upsilon_*}\rightarrow \mathbf{Tran}^{\underrightarrow{\scriptstyle{L_{\Upsilon_*}}}}) \stackrel{!}{\looparrowright}  \zeta \right)
\] where
\begin{itemize}
\item $\forall k\in\{0,1\}$, ${(T_k)}^{\upsilon_*}={(T_k)}^{\upsilon}=\{t_k\}\times \{0,1\}$, 
\item $L_{\Upsilon_*}=\{*, 0,1\}^{\{0,1\}}$,
\item $\forall \lambda\in L_{\Upsilon_*}$, $\forall k\in\{0,1\}$, $({Id_{(T_k)}})^{\upsilon_*}_\lambda=Id_{({(T_k)}^{\upsilon_*})}$ (because ${\Upsilon_*}$ is functorial),
\item $\forall \lambda\in L_{\Upsilon_*}$, $\forall s\in\{0,1\}$,
\subitem  if $\lambda(s)=*$ then
$d^{\upsilon_*}_\lambda(t_0,s)=\emptyset $,
\subitem if $\lambda(s)\in\{0,1\}$ then, like with $\Upsilon$,
$d^{\upsilon_*}_\lambda(t_0,s)=\{(t_1,\lambda(s))\}$.
\end{itemize}

In other words, viewing $d^{\upsilon_*}_\lambda$ as a partial function, it is defined for $s\in\{0,1\}$ by :
\begin{itemize}
\item if $\lambda(s)=*$ then $(t_0,s)\notin \mathrm{Def}_{d^{\upsilon_*}_\lambda}$,
\item if $\lambda(s)\in\{0,1\}$ then $d^{\upsilon_*}_\lambda(t_0,s)=
(t_1,\lambda(s))$.
\end{itemize}

\paragraph{Realizations of $\Upsilon_*$.} As in the case of $\Upsilon$, we can write $\mathfrak{S}_{\Upsilon_*}$ as the set of all couples $(\lambda,\sigma)$
with $\lambda\in{L_{\Upsilon_*}}$ and
$\sigma=\underline{\emptyset}_{\Upsilon_*}$,
or
$\sigma\in\{0,1\}^{\{t_0\}}$, or
$\sigma\in\{0,1\}^{\{t_0,t_1\}}$ with $\sigma(t_1)=\lambda(\sigma(t_0))$ (which implies that $\lambda(\sigma(t_0))\neq *$). And we have
$
Z_{\Upsilon_*}=Z_\Upsilon.
$

\end{example}

\mbox{}


\begin{example}[$\Gamma$, a hyper-deterministic intemporal lax-dynamic]\label{exm Gamma}

This is a hyper-deterministic variant of the example \ref{exm Phi}, with the same set of states and the same clock, but depending on parameter values. Precisely, we set
\[
\Gamma=
\left((\gamma:\mathbf{1}\rightharpoondown \mathbf{Tran}^{\underrightarrow{\scriptstyle{L_\Gamma}}}) \stackrel{!}{\looparrowright}  (\xi_\mathbf{1}:\mathbf{1}\rightarrow \mathbf{Sets})\right)
\] where
\begin{itemize}
\item $st(\gamma)=\bullet^\gamma=\{0,1\}$, 
\item $L_\Gamma=\{a,b\}$, a set with two elements,
\item $\overrightarrow{0}^\gamma_a=Id_{\{0,1\}}$,
\item $\overrightarrow{0}^\gamma_b$ is defined as a transition
\footnote{Equivalently, $\overrightarrow{0}^\gamma_b$ can be defined as a partial function by $0\notin \mathrm{Def}_{\overrightarrow{0}^\gamma_b}$ and $\overrightarrow{0}^\gamma_b(1)=1$.}
by
$\overrightarrow{0}^\gamma_b(0)=\emptyset$ and $\overrightarrow{0}^\gamma_b(1)=\{1\}$,
\item $\xi_\mathbf{1}$ is the canonical clock\footnote{See the example \ref{exm Phi}.} of $\mathbf{1}$ and $\gamma \stackrel{!}{\looparrowright} \xi_\mathbf{1}$ is the constant dynamorphism.
\end{itemize}

\paragraph{Realizations of $\Gamma$.} The set  $Z_\Gamma^*=\{0,1\}$ of nonempty outgoing realizations of $\Gamma$ is the same as for $\Phi$, but now we have
\[
\mathfrak{S}_\Gamma=
\{
(a,\underline{\emptyset}_\Gamma), 
(a,0),
(a,1),
(b,\underline{\emptyset}_\Gamma), 
(b,1)
\}.
\]

\end{example}

\mbox{}


\begin{example}[$\mathbb{W} = \textcjheb{w}$ , an intemporal  open dynamic with functions as states]\label{exm Vav}

The open dynamic $\mathbb{W}$ --- also denoted by the Hebrew letter \textcjheb{w} (vav) --- described in this example \ref{exm Vav} has been given in \cite{Dugowson:20160831} and \cite{DugowsonKlein:20190417} together with a dynamic denoted by $\mathbb{H}$  or by the Hebrew letter $\textcjheb{h}$ (hey) --- see \emph{infra}, example \ref{exm H0} --- and a third one denoted by $\mathbb{Y}$ or \textcjheb{y} (yod) (example \ref{exm Iod}) to produce the interactive family that we will describe in the example \ref{exm WHY family}, section \ref{subs examples of interactive families}. The choice of the Hebrew letter \textcjheb{w} comes from the fact that this dynamic is intended to (approximately and partially) model the philosophical concept that P. M. Klein \cite{KleinPM:2014} named in the same letter. The dynamic $\mathbb{W} = \textcjheb{w}$ is defined by
\[
\mathbb{W}=\left(
({\alpha_{\mathbb{W}}}:{\mathbf{1}}\rightharpoondown \mathbf{Tran}^{\underrightarrow{\scriptstyle{L_{\mathbb{W}}}}})
\stackrel{!}{\looparrowright}  
 (\xi_\mathbf{1}:\mathbf{1}\rightarrow \mathbf{Sets})
 \right),
\] 
where\footnote{For the meaning of $\mathcal{C}$, see notations in  the begining of the paper.}
\begin{itemize}
\item $st({\mathbb{W}})=\bullet^{\alpha_{\mathbb{W}}}=\mathcal{C}$,
\item $L_{\mathbb{W}}=\mathcal{C}$,
\item  for all $\lambda\in L_{\mathbb{W}}$, the transition $\overrightarrow{0}^{\alpha_{\mathbb{W}}}_\lambda$ is defined for all $f\in st({\mathbb{W}})$ by\\
\[
\overrightarrow{0}^{\alpha_{\mathbb{W}}}_\lambda(f)=
\left\lbrace
\begin{array}{l}
\{f\}\mathrm{\,\,if\,\,} f\diamondsuit \lambda,\\
\emptyset \mathrm{\,\,in\,\,other\,\,cases},
\end{array} 
\right.
\]
\item $\xi_\mathbf{1}$ is the canonical clock 
  of $\mathbf{1}$, and ${\alpha_{\mathbb{W}}} \stackrel{!}{\looparrowright} \xi_\mathbf{1}$ is the constant dynamorphism,
\end{itemize}
where $f\diamondsuit \lambda$ stands for $f_{\vert \mathrm{Def}_f\cap \mathrm{Def}_\lambda}=\lambda_{\vert \mathrm{Def}_f\cap \mathrm{Def}_\lambda}$.

\paragraph{Realizations of $\mathbb{W}$.}  For each $\lambda\in L_{\mathbb{W}}=\mathcal{C}$, 
the empty realization $\underline{\emptyset}_{\mathbb{W}_\lambda}=\underline{\emptyset}_{\mathbb{W}}$ is the partial function $st(\xi_\mathbf{1})=\{0\} \dashrightarrow st(\mathbb{W})=\mathcal{C}$ with an empty domain (or, as a transition, the map $0\mapsto \emptyset\subset\mathcal{C}$)
whereas 
a nonempty realization of $\mathbb{W}_\lambda$ can be identified with 
its value on the only instant $0\in st(\xi_\mathbf{1})$, this value being itself a real function $f\in\mathcal{C}$, possibly the empty real function $\underline{\emptyset}_\mathbf{R}$. 
Then, with this identification, we have
\[
\mathfrak{S}_\mathbb{W}
=
\bigcup_{\lambda\in\mathcal{C}}
\left(
\{
(\lambda,f), f\in\mathcal{C}, f\diamondsuit \lambda
\}
\cup
\{(\lambda,\underline{\emptyset}_{\mathbb{W}_\lambda})\}
\right).
\]
The set of nonempty outgoing realizations of $\mathbb{W}$ is then
\[
Z_\mathbb{W}^*=Z_\mathbb{W}\setminus\{\underline{\emptyset}_{\mathbb{W}}\}
=
\mathcal{C}.
\]
Note that the empty real function $\underline{\emptyset}_\mathbf{R}$ belongs to $Z_\mathbb{W}^*$.

\end{example}

\mbox{}
 

\begin{example}[$\mathbb{H}=  \textcjheb{h}$, a hyper-deterministic dynamic on $\mathbf{R}_+$]\label{exm H0}

The dynamic $\mathbb{H}$ --- also referred to as \textcjheb{h}, ``hey" in the Hebrew alphabet ---
has been introduced in \cite{Dugowson:20160831} and \cite{DugowsonKlein:20190417} under the name ``history" to constitute an interactive family together with $\mathbb{Y}=\textcjheb{y}$ (\emph{cf}. \emph{infra}, example \ref{exm Iod}) and  $\mathbb{W}=\textcjheb{w}$ (\emph{cf}. \emph{supra}, example \ref{exm Vav}). It is a hyper-deterministic functorial open dynamic with engine $(\mathbf{R}_+,+)$
and with a clock $\mathbf{h}_{]T_0,+\infty[}$ having instants $t\in ]T_0,+\infty[$ where $T_0$,
   called the \emph{origin of times}, is taken to be $\{-\infty\}\cup\mathbf{R}$. We distinguish the origin of times $T_0$ with the origin of histories which here will be taken to be $-\infty$.
More precisely, such a $T_0\in \{-\infty\}\cup\mathbf{R}$ having been chosen, we set
\[
{\mathbb{H}}={\textcjheb{h}}=
\left(
(
{(\mathbf{R}_+,+)}
\stackrel{{\alpha_\mathbb{H}}}\rightarrow 
\mathbf{Tran}^{\underrightarrow{\scriptstyle{{L_\mathbb{H}}}}}
)
\stackrel{\tau_\mathbb{H}}{\looparrowright}
{\mathbf{h}_{]T_0,+\infty[}}
\right),
\] 
with 
\begin{itemize}
\item $st(\alpha_\mathbb{H})=\bigcup_{t\in ]T_0, +\infty[}
\left(
\{t\}\times \mathcal{C}^1(]-\infty,t[)
\right)$,
\item  $st(\mathbf{h}_{]T_0,+\infty[})=]T_0,+\infty[$, 
\item ${L_\mathbb{H}}=\mathcal{C}^*_{]T_0,\rightarrow[}:=\bigcup_{u\in ]T_0, +\infty]}\mathcal{C}(]T_0,u[)$,
\item $\forall(t,f)\in st(\alpha_\mathbb{H})$, $\tau_\mathbb{H}(t,f)=t$,
\item  $\forall(t,f)\in st(\alpha_\mathbb{H})$, $\forall d\in\mathbf{R}^*_+, \forall u\in ]T_0,+\infty],\forall \lambda\in\mathcal{C}(]T_0,u[)$, 
\begin{itemize}
\item if $t+d\leq u$ and if there exists a (necessarily unique) $g\in \mathcal{C}^1(]-\infty, t+d[)$ such that $g_{\vert ]-\infty,t[}=f$ and $g_{\vert ]t,t+d[}=\lambda_{\vert ]t,t+d[}$, then we set $d^{\alpha_\mathbb{H}}_\lambda(t,f)=(t+d,g)$,
\item in all other cases, we set ${d^{\alpha_\mathbb{H}}_\lambda}((t,f))=\emptyset$ that is, viewing ${d^{\alpha_\mathbb{H}}_\lambda}$ as a partial function: $(t,f)\notin \mathrm{Def}_{d^{\alpha_\mathbb{H}}_\lambda}$.
\end{itemize}
\end{itemize}

\paragraph{Realizations of $\mathbb{H}$.}

It is easy to see that the outgoing part $\sigma$ of a nonempty realization $(\lambda,\sigma)\in \mathfrak{S}_\mathbb{H}$ can be uniquely represented by a real function of class $\mathcal{C}^1$ defined on an  interval of the form $]-\infty,a[$ or  $]-\infty,a]$, with $a>T_0$, that coincides with $\lambda$ on $]T_0,a[$. More precisely, with these representations, we verify that we can write
\[
\mathfrak{S}_\mathbb{H}=\{(\underline{\emptyset},\underline{\emptyset})\}\cup
\left[
\bigcup_{u\in]T_0,+\infty]}
\left(
\bigcup_{\lambda\in \mathcal{C}^1(]T_0,u[)}
(
\{\lambda\}
\times
\bigcup_{a\in]T_0,u]}
E_{\lambda, a}
)
\right)
\right]
\]
with $E_{\lambda, +\infty}=
\{\sigma\in \mathcal{C}^1(\mathbf{R}),
\sigma_{\vert ]T_0,+\infty[}=\lambda
\}$ whereas
\[E_{\lambda, a}=
\{
\sigma\in \mathcal{C}^1(]-\infty,a[)\cup \mathcal{C}^1(]-\infty,a]),
\sigma_{\vert ]T_0,a[}=\lambda_{\vert ]T_0,a[}
\}
\] when $a<+\infty$. 
Thus, the set of outgoing realizations of $\mathbb{H}$ is 
\[
Z_\mathbb{H}=\{\underline{\emptyset}\}
\cup
\left(
\bigcup_{a\in]T_0,+\infty]}\mathcal{C}^1(]-\infty,a[)
\right)
\cup
\left(
\bigcup_{a\in]T_0,+\infty[}\mathcal{C}^1(]-\infty,a])
\right).
\]
For any nonempty realization $\sigma\in{Z}_{\mathbb{H}}^*$ of $\mathbb{H}$, we  call the restriction ${\sigma}_{\vert ]-\infty,T_0]}$ the \emph{mythical part} of $\sigma$.
\end{example}

\mbox{}

\begin{example}[$\mathbb{Y}= $ \textcjheb{y}, a non-deterministic functorial mono-dynamic]\label{exm Iod}

Introduced in \cite{Dugowson:20160831} and \cite{DugowsonKlein:20190417} as a ``future" dynamic together with $\mathbb{W}$ (\emph{cf}. \emph{supra}, example \ref{exm Vav}) and $\mathbb{H}$ (example \ref{exm H0}), the dynamic that we designate by $\mathbb{Y}$ or \textcjheb{y} (yod) and  call a ``lipschitzian source", is defined by
\[
{\mathbb{Y}}=
\left(
({\alpha_{\mathbb{Y}}}:{(\mathbf{R}_+,+)}\rightarrow \mathbf{Tran})
\stackrel{\tau_\mathbb{Y}}{\looparrowright} 
\xi_{\mathbf{R}_+}
\right),
\] 
where
\begin{itemize}
\item  ${\xi_{\mathbf{R}_+}}$ is the existential clock associated with the monoïd $(\mathbf{R}_+,+)$, that is such that $st({\xi_{\mathbf{R}_+}})=\mathbf{R}_+$ and  $d^{\xi_{\mathbf{R}_+}}(t)=t+d$ for all instants $t\in \mathbf{R}_+$ and all durations $d\in\mathbf{R}_+$,
\item the set of states is $st(\alpha_{\mathbb{Y}})=\mathbf{R}_+\times \mathbf{R}$,
\item for all states $(t,a)\in st(\alpha_{\mathbb{Y}})$, 
\subitem $\tau_\mathbb{Y}(t,a)=t$,
\subitem and for all $d\in\mathbf{R}_+$, $d^{\alpha_{\mathbb{Y}}}(t,a)=\{t+d\}\times [a-d,a+d]$.
\end{itemize}

\paragraph{Realizations of $\mathbb{Y}$.}
It is immediate to see that a realization $\sigma\in Z_\mathbb{Y}=\mathfrak{S}_\mathbb{Y}$ is a partial function $\mathbf{R}_+\dashrightarrow \mathbf{R}_+\times \mathbf{R}$ defined on an interval $D$ of the form $[0,a]$ or $[0,a[$ that can be identified with a metric map\footnote{That is a Lipschitz function with Lipschitz constant $1$.} $\sigma:D\rightarrow\mathbf{R}$:
\[
Z_\mathbb{Y}\simeq
\bigcup_{a\in\mathbf{R}_+\cup \{+\infty\}}Lip_1([0,a[)
\cup
\bigcup_{a\in\mathbf{R}_+} Lip_1([0,a])
\]
where $Lip_1(D)=\{\sigma:D\rightarrow\mathbf{R}, \sigma\mathrm{\,is\,a\,metric\,map}\}$.
\end{example}

\subsection{Some relations with Bastiani (-Ehresmann)'s control systems}\label{subs relation with Ehresmann control systems}

In her article \cite{Bastiani_Ehresmann:196505}, published in 1967, Andrée Bastiani (-Ehresmann) considered some \emph{control systems} --- called \emph{systèmes guidables} in French --- which, leaving aside topological aspects, seem to be quite close to some of our open systems 
which we have developed, as indicated in the introduction to this paper, with a view to proposing a theory of interactivity, from 
a first categorical generalization of some ``closed" dynamical systems --- namely  mono-dynamics on monoids (see section \ref{subsubs mono-dynamics}) --- a generalization itself prompted by an oral remark by Mme Ehresmann.
To lay the foundations for a further exploration of the possible connections between these two notions, we reformulated in our own language Bastiani (-Ehresmann)'s definitions, which was originally given in the language and notations introduced by Charles Ehresmann in his book  \emph{Catégories et Structures} \cite{EhresmannCharles:1965} and which have been more recently rapidly mentioned again by Mme Ehresmann in two lectures \cite{Ehresmann:201006,Ehresmann:201106}, with more current notations. Leaving aside, as announced, topological aspects, it then turns out that the definition of a control system given by Mme Ehresmann is equivalent to considering the data $(F,q)$ of a disjunctive functor $\mathbf{G}\stackrel{F}{\rightarrow}\mathbf{ParF}$ and of a functor $\mathbf{G}\stackrel{q}{\rightarrow} \mathbf{H}$, with $\mathbf{G}$ and $\mathbf{H}$ some categories which we will assume to be small. Intuitively,  the objects of $\mathbf{H}$ can be seen as  \emph{instants}, and its arrows as \emph{durations} whereas the objects of $\mathbf{G}$ can be seen as ``parameterized instants" and its arrows as ``parameterized durations".
With the functor $F$ is then defined a set $E:= \bigsqcup_{g\in\dot{\mathbf{G}}} F(g)$ whose elements we shall see as ``parameterized states",  and a partial action of $\mathbf{G}$  on $E$ given  for all $\gamma\in\overrightarrow{\mathbf{G}}$ and all $e\in \mathrm{Def}_{F(\gamma)}\subseteq F(\mathrm{dom}(\gamma))\subseteq E$ by $\gamma.e:=F(\gamma)(e)$.
A \emph{solution on a subcategory $\mathbf{S}\subseteq \mathbf{H}$} of the control system $(F,q)$
then consists in a  couple $(\dot{\mathbf{S}}\stackrel{\varphi}{\rightarrow}E,\mathbf{S}\stackrel{\psi}{\rightarrow}\mathbf{G})$
where   $\psi:\mathbf{S}{\rightarrow}\mathbf{G}$ is a functor and $\varphi$ is a map that  associates with each instant $t\in \dot{\mathbf{S}}$ a parameterized state $\varphi(t)\in E$, 
 such that we have $ \dot{q}\circ p\circ \varphi=Id_{\dot{\mathbf{S}}}$, $ q\circ \psi=Id_{\overrightarrow{\mathbf{S}}}$ and,  for all $h\in\overrightarrow{\mathbf{S}}$,
$\psi(h).\varphi(t_1)=\varphi(t_2)$ where $t_1=\mathrm{dom}(h)$ and $t_2=\mathrm{cod}(h)$.

An interpretation of these definitions in relation with ours is given by the following association with each functorial hyper-deterministic open dynamic 
\[
A=\left((\alpha:\mathbf{D}\rightarrow \mathbf{ParF}^{\underrightarrow{\scriptstyle{L}}}) \stackrel{\rho}{\looparrowright}  (\mathbf{h}:\mathbf{D}\rightarrow \mathbf{Sets})\right)
\]
of a Bastiani (-Ehresmann)'s control system 
\[GS(A)=(\mathbf{G}\stackrel{F}{\rightarrow}\mathbf{ParF},\mathbf{G}\stackrel{q}{\rightarrow} \mathbf{H}),\]
namely the one given by 
\begin{itemize}
\item $\dot{\mathbf{H}}:=st(\mathbf{h})$,
\item $\overrightarrow{\mathbf{H}}:=\{(t_1,d,t_2)\in \dot{\mathbf{H}}\times\overrightarrow{\mathbf{D}}\times \dot{\mathbf{H}}, d^\mathbf{h}(t_1)=t_2\}$, with obvious source, target and composition,
\item $\mathbf{G}:=L\times\mathbf{H}$ (where $L$ is seen as a discrete category), 
\item $q:\mathbf{G}{\rightarrow} \mathbf{H}$ is the projection on $\mathbf{H}$, that is the forgetting of the parameter:
\[
q\left((\lambda,t_1)\stackrel{(\lambda,t_1,d,t_2)}{\longrightarrow}(\lambda, t_2)\right)
:=
\left(t_1\stackrel{(t_1,d,t_2)}{\longrightarrow}t_2\right),
\]
\item for all $(\lambda,t)\in\dot{\mathbf{G}}$,  $F(\lambda,t):=\{\lambda\}\times \rho^{-1}(t)\subset L\times st(\alpha)$,
\item for all $((\lambda,t_1)\stackrel{(\lambda,t_1,d,t_2)}{\longrightarrow}(\lambda, t_2))\in\overrightarrow{\mathbf{G}}$ and all $s\in\rho^{-1}(t_1)$,  
\subitem - if $s\in \mathrm{Def}_{d^\alpha_\lambda}$ then
$F(\lambda,t_1,d,t_2)(\lambda,s):=(\lambda, d^\alpha_\lambda(s))$,
\subitem - else $(\lambda,s)\notin \mathrm{Def}_{F(\lambda,t_1,d,t_2)}$.
\end{itemize}

It is then straightforward to verify that every realization $(\lambda,\sigma)\in\mathfrak{S}_A$ 
gives a solution $(\varphi,\psi)$ 
of the  control system $GS(A)$ over the full subcategory 
$\mathbf{S}\subseteq \mathbf{H}$ 
defined by $\dot{\mathbf{S}}=\mathrm{Def}_\sigma$, namely the couple $(\varphi,\psi)$ given by $\varphi(t)=(\lambda,\sigma(t))\in E=\bigsqcup_{g\in\dot{\mathbf{G}}}F(g)$ for every $t\in\dot{\mathbf{S}}$ and  $\psi(t_1,d,t_2)=(\lambda,t_1,d,t_2)\in \overrightarrow{\mathbf{G}}$
for every $(t_1,d,t_2)\in\overrightarrow{\mathbf{S}}$.

The association $A\mapsto GS(A)$  gives us a first idea of the possible relationships between our open systems and Mme Ehresmann's control systems, each with their own limitations. Let us make a few comments on this. First, not that  $GS$ is not injective (up to isomorphism)
since, for example
\begin{itemize}
\item the open dynamic
$A=\left(
(\xi:(\mathbf{R}_+,+)\rightarrow \mathbf{Sets}) 
\stackrel{Id}{\looparrowright}  
(\xi:(\mathbf{R}_+,+)\rightarrow \mathbf{Sets}) 
\right)$, where $st(\xi)=\mathbf{R}_+$ and, for every $d\in\mathbf{R}_+$ and every $t\in\mathbf{R}_+$, $d^\xi(t)=t+d$, 
\item and the open dynamic
$B=\left(
(\zeta:(\mathbf{R}_+,\leq)\rightarrow \mathbf{Sets}) 
\stackrel{Id}{\looparrowright}  
(\zeta:(\mathbf{R}_+,\leq)\rightarrow \mathbf{Sets}) 
\right)$, where for every $t\in\mathbf{R}_+$ we have $t^\xi=\{t\}$  and, for every $d=(t_1\leq t_2)\in \overrightarrow{(\mathbf{R}_+,\leq)}$, we have $d^\zeta(t_1)=t_2$, 
\end{itemize}
are not isomorphic, but $GS(A)$ and $GS(B)$ are essentially the same control systems, the important difference between the categories $(\mathbf{R}_+,+)$ and $(\mathbf{R}_+,\leq)$ being lost in translation. 

In addition, while our open dynamics are not necessarily deterministic whereas Mme Ehresmann's control systems could be said to be hyper-deterministic,
the formulation we obtained of a control system as a couple $(\mathbf{G}\stackrel{F}{\rightarrow}\mathbf{ParF},{\mathbf{G}\stackrel{q}{\rightarrow} \mathbf{H})}$ suggests a non-deterministic generalization, given by couples of the form $(\mathbf{G}\stackrel{F}{\rightarrow}\mathbf{Trans},\mathbf{G}\stackrel{q}{\rightarrow} \mathbf{H})$. As we said in our introduction,  the necessity to use lax-functors instead of functors in our own theory came from our treatment of interactivity (cf. theorem \ref{thm stability}). If a theory of interacting ``control systems'' would be developed, it could lead as well to consider lax-functorial non-deterministic systems given by couples of the form 
$(\mathbf{G}\stackrel{F}{\rightharpoondown}\mathbf{Trans},\mathbf{G}\stackrel{q}{\rightarrow} \mathbf{H})$ which could be the subject of further research. 

On the other side, $GS$ isn't surjective either. In particular, note that 
the design of control systems gives to their ``parametrical" aspects ---  which are implied both in the category $\mathbf{G}$ of ``parameterized instants" and in the set $E$ of ``parameterized states"  --- a local nature, as opposed to the parameters of our open dynamics, which are on the contrary global in nature, and this can be viewed as an advantage of control systems. 

Finally, according to our definition of the realizations of an open dynamic, note that even in the case when a control system $G$ is of the form $GS(A)$ with $A$ an open dynamic  in the sense of our theory, a solution of $G$ that is defined over a subcategory $\mathbf{S}\subseteq\mathbf{H}$ that does not satisfy the property
\[\forall(t_1,d,t_2)\in\overrightarrow{\mathbf{H}},\,
t_2\in\dot{\mathbf{S}}
\Rightarrow 
 (t_1,d,t_2)\in\overrightarrow{\mathbf{S}}\]
cannot be obtained from a realization of $A$ : thanks to a greater partiality, Bastiani (-Ehresmann)'s control systems have more solutions\footnote{At least as they are defined in \cite{Bastiani_Ehresmann:196505} since this notion of partial solutions does not appear at all in \cite{Ehresmann:201006} and \cite{Ehresmann:201106}.} than our open dynamics, and this can be seen as another advantage of control systems. Of course, it would be easy to broaden in turn our definition of realizations of open dynamics to include more partiality, but the real difficulties will then arise in interacting with other dynamics: how can a complex system work  when some of its components are removed or added ? This type of question, linked to the philosophical problem known as the ``Ship of Theseus", seems to us to be at the core of Andrée Ehresmann's research work, but in its current state our own theory does not yet allow us to address it correctly since our collective \emph{global dynamics} need all their components to be ``simultaneously"\footnote{For some \emph{synchronization}.} active to obtain a realization defined at the corresponding instant.

\section{Interactive families}\label{sec interactive families}

The main purpose of this section is to give the definition of interactive families, namely interacting families of open dynamics. For this, we firstly  give some reminders about \emph{binary relations}, \emph{multiple relations} and \emph{multiple binary relations} (\textbf{§\,\ref{subs reminders multiple binary relations}}), then  we give the definitions of an \emph{interaction request} and of an \emph{interaction relation} between some open dynamics (\textbf{§\,\ref{subs interaction relations in a family}}) and the definition of a \emph{synchronization} between these dynamics  (\textbf{§\,\ref{subs synchronization}}). An interaction request (or an interaction relation) and a synchronization then define an \emph{interaction} in the family of open dynamics under consideration, and such an interaction --- together with a third element, called \emph{privacy} or \emph{social mode} --- leads in turn  to the definition of an \emph{interactive family} (\textbf{§\,\ref{subs familles interactives}}). In \textbf{§\,\ref{subs connectivity structures of an IF}}, we associate four connectivity structures with any given interactive family, in particular the \emph{realization connectivity structure of the interaction relation}, which is the most important and which we  simply call \emph{the connectivity structure} of the considered interactive family. Finally, in \textbf{§\,\ref{subs examples of interactive families}}, we give some examples of interactive families.

\subsection{Binary, multiple and multiple binary relations}\label{subs reminders multiple binary relations}

\subsubsection{Binary relations}

Given $E$ and $E'$ two sets, a binary relation $B$ from $E$ to $E'$ is defined by its \emph{domain} $E=\mathrm{dom}(B)$, its \emph{codomain} $E'=\mathrm{cod}(B)$ and its \emph{graph} $\vert B\vert\subset E\times E'$. According to the introduction of the paper, we  also consider such a binary relation as a (not necessarily deterministic) transition $E\rightsquigarrow E'$, that is a map $E\rightarrow \mathcal{P}(E')$, setting for any $e\in E$
\[
B(e)=\{e'\in E', (e,e')\in \vert B\vert\}.
\]
Then
the \emph{image} of $B$ is defined by
\[\mathrm{Im}(B)
=\bigcup_{e\in E} B(e)\subset E',\] 
the \emph{converse binary relation}, denoted as $B^{-1}$ or  $B^\top$, is
defined by its graph 
\[
\vert B^\top\vert:=\vert B\vert^\top=
\{(e,e')^\top, (e,e')\in \vert B\vert\}
\quad\mathrm{where}\,(e,e')^\top:=(e',e)
\]
 or, equivalently, by
\[
\forall e'\in E',
B^\top(e')
=\{e\in E, B(e)\ni e'\},
\] 
and the \emph{domain of definition} of $B$ is given by
\[
\mathrm{Def}_{B}=\{e\in E, B(e)\neq\emptyset\}=\mathrm{Im}(B^\top).
\]

The set of binary relations from $E$ to $E'$ is denoted by $\mathbf{BR}_{(E,E')}$, and the class of all binary relations is denoted by $\mathbf{BR}$.

\subsubsection{Multiple relations}

In this section and the next, we recall  the definitions we gave in \cite{Dugowson:20150505}  and \cite{Dugowson:20150807} about \emph{multiple relations} and \emph{multiple binary relations}\footnote{Or, as well: \emph{binary multiple relations}.}. Given 
 $\mathcal{E}=(E_i)_{i\in I}$ a family of sets indexed by a set $I$,
 the product  $\prod_{i\in I} E_i$  is also denoted as $\Pi_I\mathcal{E}$ or $\Pi\mathcal{E}$.

\begin{df}\label{df relation multiple} A multiple relation  $R$ is the data  $R=(I,\mathcal{E},\vert R\vert)$ of
\begin{itemize}
\item a set $I=\mathrm{ar}(R)$, called the \emph{index set}  or the \emph{arity} of $R$,
\item an $I$-family of sets $\mathcal{E}=(E_i)_{i\in I}$ called the \emph{context of $R$}, 
\item a subset $\vert R\vert\subseteq\Pi_I\mathcal{E}$, called the \emph{graph of $R$}.
\end{itemize}
\end{df}

 The class of all multiple relations with a given index set $I$ --- which are also  called \emph{$I$-relations} --- is denoted by $\mathbf{MR}_I$. Given a context $\mathcal{E}=(E_i)_{i\in I}$ on $I$, 
  the set of multiple relations with context $\mathcal{E}$ is denoted as $\mathbf{MR}_{\mathcal{E}}$. 
For example, if $2$ denotes the set $\{0,1\}$, 
the class $\mathbf{MR}_2$ can be seen as the class $\mathbf{BR}$ of all binary relations between sets and, given  $(E_0,E_1)$ a couple of sets, we have $\mathbf{MR}_{(E_0,E_1)}=\mathbf{BR}_{(E_0,E_1)}$.

If $R$ and $S$ are multiple relations in a context $\mathcal{E}$, we'll denote by $R\cap S$ their intersection, that is the multiple relation in the same context such that $\vert R\cap S\vert=\vert R\vert \cap \vert S\vert$, and we define an order $(\mathbf{MR}_{\mathcal{E}},\subseteq)$ by putting $R\subseteq S$ when $R=R\cap S$. 
If $J\subseteq I$, we put
 $\mathcal{E}_{\vert J}=(E_j)_{j\in J}$,  $\Pi_J\mathcal{E}= \Pi(\mathcal{E}_{\vert J})=\prod_{j\in J} E_j$ and we designate by $0_J$ the minimum element of $(\mathbf{MR}_{\mathcal{E}_{\vert J}},\subseteq)$, that is the empty $J$-relation $0_J=(J,\mathcal{E}_{\vert J},\emptyset)$, and by $1_J$ its maximum element, that is the plain $J$-relation  $1_J=(J,\mathcal{E}_{\vert J},\Pi_J\mathcal{E})$.  
Note that in the case where $J=\emptyset$, we have $0_\emptyset\neq 1_\emptyset$, since the graph of $1_\emptyset$ is a singleton $\Pi_\emptyset\mathcal{E}=\{\bullet\}$, whereas $\vert 0_J\vert=\emptyset$.
If $R\in\mathbf{MR}_{\mathcal{E}_{\vert J}}$, we also denote by $R_{\vert K}$ 
the restriction of $R$ on $K\subseteq J$, that is the $K$-relation defined by 
$R_{\vert K}:=(K,\mathcal{E}_{\vert K}, \vert R\vert_{\vert K})$, where $\vert R\vert_{\vert K}=\{y_{\vert K}, y\in\vert R\vert\}$ or, equivalently,
$\vert R\vert_{\vert K}=\{x\in\Pi_K\mathcal{E}, \exists y\in\vert R\vert, \forall k\in K, x_k=y_k\}$. 
Finally, we denote  by
$\mathbf{MR}_{\subseteq\mathcal{E}}$ the set of multiple relations  \emph{inside} the context $\mathcal{E}$, that is the set of all multiple  relations  $R=(J,\mathcal{E}_{\vert J},\vert R\vert)$ with
 $J\subseteq I$  and
$\vert R\vert\subseteq\Pi_J\mathcal{E}$. In other words 
$\mathbf{MR}_{\subseteq\mathcal{E}} 
=\bigcup_{J\subseteq I}\mathbf{MR}_{\mathcal{E}_{\vert J}}
\subset \bigcup_{J\subseteq I}\mathbf{MR}_J$.
The set  $\mathbf{MR}_{\subseteq\mathcal{E}}$ can be endowed with a ``gluing operator" $\otimes$ defined\footnote{See \cite{Dugowson:20150505}, section § \textbf{1.5.1}, where it was denoted by $\bowtie$.} for a $J_1$-relation $R_1$ and a $J_2$-relation $R_2$ as the $(J_1\cup J_2)$-relation $R_1\otimes R_2$ containing all ``glued" families $x_1+x_2$ with some \emph{compatible} $x_n\in \vert R_n\vert$, that is such that $x_1$ and $x_2$ have the same restrictions on $J_1\cap J_2$. In other words, for every $x\in\Pi_{J_1\cup J_2}\mathcal{E}$, we have $x\in\vert R_1\otimes R_2\vert$ if and only if $x_{\vert J_1}\in\vert R_1\vert$ and $x_{\vert J_2}\in\vert R_2\vert$.
Note also that the relation $\mathbf{1}=1_\emptyset$ is  neutral for this operator, giving $(\mathbf{MR}_{\subseteq\mathcal{E}}, \otimes, \mathbf{1})$ a structure of a commutative monoïd,  whereas $0_I$ is an  annihilating element. 

\begin{rmq} The intersection of two multiple relations in a given context is nothing but a peculiar case of the gluing operator $\otimes$ applied to relations inside a same context and having a same arity.
\end{rmq}

%
%
%

\subsubsection{Multiple binary relations}\label{subsubs multiple binary relation}

\begin{df}\label{df multiple binary relation} A \emph{multiple binary relation} $Q$ is the data $(I,\mathcal{W},\mathcal{M},\vert Q \vert)$ of
\begin{itemize}
\item a set $I=\mathrm{ar}(Q)$, called the \emph{index set}  or the \emph{arity} of $Q$,
\item an $I$-family of sets $\mathcal{W}=(W_i)_{i\in I}$ called the \emph{incoming context of $Q$}, 
\item an $I$-family of sets $\mathcal{M}=(M_i)_{i\in I}$ called the \emph{outgoing context of $Q$}, 
\item a subset $\vert Q \vert\subseteq\Pi_I\mathcal{E}$ called the \emph{graph of $Q$}, where $\mathcal{E}=(E_i)_{i\in I}$ is given by $E_i=W_i\times M_i$ for all $i\in I$ and is called the \emph{product  context of $Q$}.
\end{itemize}
\end{df}

 The class of all multiple binary relations with a given index set $I$ --- which are also called \emph{$I$-multiple binary relations} or \emph{$I$-binary relations} --- is denoted by $\mathbf{MBR}_I$. The set of all multiple binary relations with  given incoming context  $\mathcal{W}=(W_i)_{i\in I}$ and outgoing context $\mathcal{M}=(M_i)_{i\in I}$ is denoted by $\mathbf{MBR}_{(\mathcal{W}, \mathcal{M})}$ and, as in the case of multiple relations, we'll denote $\mathbf{MBR}_{\subseteq(\mathcal{W}, \mathcal{M})}$ the set of multiple binary relations  \emph{inside} the context $(\mathcal{W}, \mathcal{M})$, that is the set of all multiple  relations  $R=(J,\mathcal{W}_{\vert J}, \mathcal{M}_{\vert J}, \vert R\vert)$ with
 $J\subseteq I$, 
and
$\vert R\vert
\subseteq \Pi_J\mathcal{E}
=\prod_{j\in J} {(W_j\times M_j)}$. A gluing operator $\otimes$ is defined on $\mathbf{MBR}_{\subseteq(\mathcal{W}, \mathcal{M})}$ exactly in the same way that for $\mathbf{MR}_{\subseteq\mathcal{E}}$: if, for $n\in\{1,2\}$, we have $R_n=(J_n, \mathcal{W}_{\vert J_n}, \mathcal{M}_{\vert J_n}, \vert R_n\vert)$, then $R_1\otimes R_2$  designates the multiple binary relation $R$  with arity $J=J_1\cup J_2$, with context $(\mathcal{W}_{\vert J}, \mathcal{M}_{\vert J})$ and with graph 
$\vert R\vert
=\{y\in  \Pi_J\mathcal{E}, 
\forall n\in\{1,2\}, 
y_{\vert J_n}\in\vert R_n\vert\}$.
When $R_1$ and $R_2$ have the same arity, 
$R_1\otimes R_2$ can be simply denoted by 
$R_1\cap R_2$, 
and we obtain an order on 
$\mathbf{MBR}_{(\mathcal{W}_{\vert J}, \mathcal{M}_{\vert J})}$ by putting 
$R_1\subseteq R_2$ iff $R_1\cap R_2=R_1$.

\subsubsection{Type conversions between $\mathbf{MBR}_I$, $\mathbf{MR}_{2I}$, $\mathbf{MR}_I$ and $\mathbf{BR}$}\label{conversions between MR, MBR and BR}

With any $Q=(I,\mathcal{W},\mathcal{M},\vert Q\vert)\in \mathbf{MBR}_I$, we associate the $I$-multiple relation $mr(Q):= (I,\mathcal{E}, \vert Q\vert)\vert$ where $\mathcal{E}=(W_i\times M_i)_{i\in I}$. Note that the gluing operator $\otimes$ defined on $\mathbf{MBR}_{\subseteq(\mathcal{W}, \mathcal{M})}$ can then be defined from the operator $\otimes$ defined on $\mathbf{MR}_{\subseteq \mathcal{E}}$ by the fact that, for $R_1$ and $R_2$ belonging to $\mathbf{MBR}_{\subseteq(\mathcal{W}, \mathcal{M})}$, we have $mr(R_1\otimes R_2)=mr(R_1)\otimes mr(R_2)$. Of course, if $R_1$ and $R_2$ have the same arity, 
we also have  $mr(R_1\cap R_2)=mr(R_1)\cap mr(R_2)$.
With each $Q\in \mathbf{MBR}_I$, we also associate the binary relation\footnote{Recall that we often see binary relations as (not necessarily deterministic) transitions.} $br(Q):\Pi_I \mathcal{W}\rightsquigarrow \Pi_I\mathcal{M}$ that has graph $\vert br(Q)\vert$ given by $\vert Q\vert$ after an obvious re-indexing. Note that the applications $mr:\mathbf{MBR}_I\rightarrow\mathbf{MR}_I$ and $br:\mathbf{MBR}_I\rightarrow\mathbf{BR}$ so defined are injective on non-empty relations\footnote{Because if $\vert Q\vert\neq\emptyset$, then $\Pi_I\mathcal{W}\times\Pi_I\mathcal{M}\neq\emptyset$ and, in this case, this product characterizes all sets $W_i$ and $M_i$.}. 
In particular, we will often define the graph $\vert Q\vert$ of a multiple binary relation $Q\in \mathbf{MBR}_{(\mathcal{W}, \mathcal{M})}$ by giving, for all $w\in \Pi_I\mathcal{W}$, the set
$br(Q)(w)\subset\Pi_I\mathcal{M}$.

Note also that, applying notations for binary relations, we have:
\[\mathrm{Im}(br(Q))
=\bigcup_{w\in\Pi_I(\mathcal{W})} br(Q)(w)\subset\Pi_I\mathcal{M},\] 
\[\forall \mu\in\Pi_I\mathcal{M}, br(Q)^\top(\mu)=\{w\in\Pi_I(\mathcal{W}), br(Q)(w)\ni\mu\},\] 
and  
\[
\mathrm{Def}_{br(Q)}=\{w\in\Pi_I(\mathcal{W}), br(Q)(w)\neq\emptyset\}=\mathrm{Im}(br(Q)^\top).
\]

Moreover, by putting $2I=I_0\cup I_1$ where, for $k\in\{0,1\}$, $I_k=I\times\{k\}$, we define canonical reciprocal bijections 
 \[mr_2:\mathbf{MBR}_I\leftrightarrow\mathbf{MR}_{2I}:mbr\]
  in a trivial way: for any $Q=(I,\mathcal{W},\mathcal{M},\vert Q\vert)\in \mathbf{MBR}_I$, where $\mathcal{W}=(W_i)_{i\in I}$ and $\mathcal{M}=(M_i)_{i\in I}$, we set $mr_2(Q)=(2I,\mathcal{D},\widetilde{\vert Q\vert})$ where
$\mathcal{D}=(D_j)_{j\in I_0\cup I_1}$ with, for each $i\in I$, $D_{(i,0)}=W_i$ and $D_{(i,1)}=M_i$,
and $\widetilde{\vert Q\vert}$ is the image of $\vert Q\vert$ given by the canonical bijection $\Pi_I(W_i\times M_i)\rightarrow \Pi_{2I}\mathcal{D}$.

\subsection{Interaction relations in a family of open dynamics}
\label{subs interaction relations in a family}

From now on, $I$ denotes a non-empty set and  $\mathcal{A}=(A_i)_{i\in I}$ an $I$-family 
of 
open dynamics  $A_i=\left((\alpha_i:\mathbf{D}_i\rightharpoondown \mathbf{Tran}^{\underrightarrow{\scriptstyle{L_i}}}) \stackrel{\rho_i}{\looparrowright}  (\mathbf{h}_i:\mathbf{D}_i\rightarrow \mathbf{Sets})\right)$.
For each $i\in I$, the set $Z_{A_i}$ of outgoing realizations of $A_i$ is simply  denoted by $Z_i$ --- thus $Z_i^*$  denotes the set of nonempty realizations of $A_i$ --- and, for any $\lambda\in L_i$, the set of (outgoing) realizations of the open mono-dynamic $(A_i)_\lambda$ is denoted by $Z_{i,\lambda}$ instead of $ Z_{(A_i)_\lambda}$.
We also put 
$\mathcal{Z}:=({Z}_{i})_{i\in I}$, 
$\mathcal{Z}^*:=({Z}^*_{i})_{i\in I}$,
$\mathcal{L}:=(L_i)_{i\in I}$ and 
$\mathcal{E}:=(E_i)_{i\in I}$ 
where, for each $i\in I$, $E_i:={Z}_i\times L_i$.
The elements $\mathfrak{q}$ of $\Pi_I\mathcal{E}\backsimeq \Pi_I\mathcal{Z}\times \Pi_I\mathcal{L}$ are often  denoted as in the following form:
\begin{equation}\label{eq form of families in a request}
\mathfrak{q}=
\left(
\begin{array}{c}
\lambda_i  \\ 
\sigma_i 
\end{array} 
\right)_{i\in I}
\end{equation}
 with, for all $i\in I$, $\sigma_i\in Z_i$ and $\lambda_i\in L_i$. The coefficients of  $\mathfrak{q}$ is sometimes  designated for all $i\in I$ by $\mathfrak{q}_i:=\sigma_i$ and $\mathfrak{q}^i:=\lambda_i$.
With these notations, such a $\mathfrak{q}\in\Pi_I\mathcal{E}$ is said to be
\emph{coherent (for the family $\mathcal{A}$)} if, for all $i\in I$,
$\mathfrak{q}_i\in {Z}_{i,\mathfrak{q}^i}$. More generally, a set $C\subset\Pi_I\mathcal{E}$ 
is said to be \emph{coherent} if all its elements are coherent, and a multiple binary relation $Q\in \mathbf{MBR}_{(\mathcal{Z}, \mathcal{L})}$ is said to be \emph{coherent} if its graph $\vert Q\vert$ is also coherent. The multiple binary relation whose graph is the maximal coherent one is denoted $\Omega_\mathcal{A}$. Then a multiple binary relation $Q\in \mathbf{MBR}_{(\mathcal{Z}, \mathcal{L})}$ is coherent if $Q\subseteq \Omega_\mathcal{A}$, that is if  $\vert Q\vert \subseteq\vert \Omega_{\mathcal{A}}\vert$.
Remark that
\[
\vert \Omega_{\mathcal{A}}\vert=
(\Pi_I \mathfrak{S}_i)^\top:=
\{\mathfrak{q}\in \Pi_I\mathcal{E}, \mathfrak{q}^\top\in \Pi_I\mathfrak{S}_i\},
\]
where $\mathfrak{q}^\top:=\left(
\begin{array}{c}
\sigma_i  \\ 
\lambda_i 
\end{array} 
\right)_{i\in I}\in\prod_{i\in I}(L_i\times Z_i)$.
With every multiple binary relation $Q\in \mathbf{MBR}_{(\mathcal{Z}, \mathcal{L})}$ we associate its \emph{coherent part}
$\widecheck{Q}:= Q\cap \Omega_\mathcal{A}$, that is the multiple binary relation such that
$\vert \widecheck{Q} \vert = \vert Q\vert\cap \vert \Omega_{\mathcal{A}}\vert$.

In the following, multiple binary relations $Q\in \mathbf{MBR}_{(\mathcal{Z}, \mathcal{L})}$ are also  called \emph{interaction requests} for the family $\mathcal{A}=(A_i)_{i\in I}$. Such a request $Q$  is said to be 
\begin{itemize}
\item \emph{normal} if $\mathrm{Def}_{br(Q)}\supseteq\Pi_I\mathcal{Z}^*$, 
\item \emph{admissible} if $\widecheck{Q}\neq\emptyset$,
\item \emph{functional} if $br(Q)$ is a (partial) function $\Pi_I\mathcal{Z}\dashrightarrow \Pi_I\mathcal{L}$, that is if for every $(\sigma_i)_{i\in I}\in \Pi_I\mathcal{Z}$ one has
\[card(br(Q)((\sigma_i)_{i\in I}))\leq 1,\] 
\item \emph{strongly functional} if for every $i\in I$ and for every $(\sigma_j)_{j\in I\setminus \{i\}}\in \Pi_{j\neq i} Z_j$, one has 
\[card 
(\{
\lambda_i\in L_i,
\exists \mathfrak{q}\in\vert Q\vert,
(\forall j\neq i, \mathfrak{q}_j=\sigma_j)
\,\mathrm{and}\,
\mathfrak{q}^i=\lambda_i
\})
\leq 1.\]
\end{itemize}

We'll denote by $\mathbf{MBR}^\#_{(\mathcal{Z}, \mathcal{L})}$ the set of admissible interaction requests.

\begin{df} \label{df interaction relation}[Interaction relations]
An \emph{interaction relation for $\mathcal{A}$} is a coherent interaction request for $\mathcal{A}$.    
\end{df}

The set of interaction relations for the family $\mathcal{A}$ of open dynamics is denoted by $\mathbf{IR}_\mathcal{A}$. Note that
\[mr((\mathbf{IR}_\mathcal{A})^\top)=\mathbf{MR}_{(\mathfrak{S}_i)_{i\in I}}.\]

Given  $R\in\mathbf{IR}_\mathcal{A}$ 
an interaction relation for $\mathcal{A}$, we  say that
\begin{itemize}
\item $R$ is \emph{normal} if there exists a \emph{normal} interaction request $Q\in \mathbf{MBR}_{(\mathcal{Z},\mathcal{L})}$ such that $\widecheck{Q}=R$,
\item $R$ is \emph{efficient} if $\mathrm{Def}_{br(R)}\subsetneqq\Pi_I\mathcal{Z}$,
\item $R$ is  \emph{functional} (resp. \emph{strongly functional}) if it is so as a request.
\end{itemize}

\begin{exm} For the inclusion order, $\Omega_\mathcal{A}$ is the greatest interaction relations for $\mathcal{A}$. It is normal but not efficient. Indeed, we have $\Omega_\mathcal{A}=\widecheck{Q_M}$, where $Q_M$ designates the greatest interaction request in the context given by $\mathcal{A}$, that is such that $\vert Q_M\vert=\Pi_I\mathcal{E}$, and   $Q_M$ is a normal interaction request since $\mathrm{Def}_{br(Q)}=\Pi_I\mathcal{Z}$. $\Omega_\mathcal{A}$ is not efficient since $\mathrm{Def}_{br(\Omega_\mathcal{A})}=\Pi_I\mathcal{Z}$ (because for all $\sigma_i\in Z_i$, there exists $\lambda_i\in L_i$ such that $\sigma_i\in Z_{i,\lambda_i}$). Note also that, in general, $\Omega_\mathcal{A}$ is not functional. We could say that the graph $\vert \Omega_\mathcal{A}\vert$ is too large to define an efficient interaction relation: interacting is restricting possibilities so, roughly speaking, the smaller is the graph of an interaction relation, the stronger is this interaction.
\end{exm}

\begin{exm}\label{exm interaction relation paranormale entre mono dynamiques}
Let $I=\{1,2\}$ and $A_1=A_2=A$ with $A$ the open functorial non-deterministic mono-dynamic defined by $A=\left((\alpha:(\mathbf{N},+)\rightarrow \mathbf{Tran}) \stackrel{\rho}{\looparrowright}  \mathbf{h}\right)$ with $st(\alpha)=\bullet^\alpha:=\mathbf{N}\times\mathbf{R}$, $st(\mathbf{h})=\mathbf{N}$ 
and for all $(n,r)\in st(\alpha)$,
\begin{itemize}
\item $\rho(n,r)=n$,
\item $\forall d\in\mathbf{N}^*, d^\alpha(n,r)=\{n+d\}\times \mathbf{R}$.
\end{itemize}
The set $Z_A$ of (outgoing) realizations of $A$ can be seen as the set of finite or infinite sequences $\sigma=(s_n)_{n\in \mathbf{N}_\sigma}$ of reals, with $\mathbf{N}_\sigma$ an initial segment of $\mathbf{N}$, and we have $Z_1=Z_2=Z_A$. The set of parameter values of the mono-dynamic $A$ is a singleton, thus we can write $L_1=L_2=\{*\}$. Let's now  consider the interaction relation $R$ given by the graph
\[ \vert R\vert
=
\left\lbrace 
\left(
\begin{array}{cc}
 * & * \\ 
 \sigma & \sigma
\end{array}  
\right),
\sigma\in Z
\right\rbrace.
\]
Then $R$ is obviously a non-normal, functional efficient interaction relation. The lack of normality means that relations between outgoing realizations of the two dynamics $Z_1$ and $Z_2$ are not founded on parameter values, but are directly established. Seeing parameter values as data that dynamics can receive from others, we could say that such a non-normal interaction relation is a ``paranormal" relation.
\end{exm}

\subsection{Synchronizations}\label{subs synchronization}

Recall that $I$ denotes a non-empty set and  $\mathcal{A}=(A_i)_{i\in I}$ an $I$-family 
of 
open dynamics  $A_i=\left((\alpha_i:\mathbf{D}_i\rightharpoondown \mathbf{Tran}^{\underrightarrow{\scriptstyle{L_i}}}) \stackrel{\rho_i}{\looparrowright}  (\mathbf{h}_i:\mathbf{D}_i\rightarrow \mathbf{Sets})\right)$.

Let's begin with the notion of a synchronization of an open dynamic by another, denoting $1$ and $0$ their index in the family $\mathcal{A}$.

\begin{df}\label{df synchronization}
A \emph{synchronization of $A_1$ by $A_0$} is the data $(\Delta,\delta)$ of
\begin{itemize}
\item a map $\Delta:\dot{\mathbf{D}}_0\rightarrow \dot{\mathbf{D}}_1$ defined on the objects of  $\mathbf{D}_0$,
\item a map $\delta:st(\mathbf{h}_0)\rightarrow st(\mathbf{h}_1)$  \emph{compatible} with $\Delta$ in the meaning that
\[\forall S\in\dot{\mathbf{D}}_0, \forall s\in S^{\mathbf{h}_0}, \delta(s)\in (\Delta S)^{\mathbf{h}_1},\]
\end{itemize}
and such that $\delta$ is monotonic, which means that $\delta$ is  
\begin{itemize}
\item either increasing: $\forall(s_0,t_0)\in st(\mathbf{h}_0)^2, s_0\leq_{\mathbf{h}_0} t_0\Rightarrow \delta(s_0)\leq_{\mathbf{h}_1} \delta(t_0)$, 
\item or decreasing:  $\forall(s_0,t_0)\in st(\mathbf{h}_0)^2, s_0\leq_{\mathbf{h}_0} t_0\Rightarrow \delta(t_0)\leq_{\mathbf{h}_1} \delta(s_0)$,
\end{itemize}
where $\leq_{\mathbf{h}_i}$ denotes the pre-order on $\mathbf{h}_i$-instants\footnote{See section \ref{subsubs clocks}.}.
\end{df}

We   write $(\Delta,\delta):\mathbf{h}_0\Rsh \mathbf{h}_1$ to indicate that $(\Delta,\delta)$ is a synchronization of $\mathbf{h}_1$ by $\mathbf{h}_0$. Such a synchronization is said to be \emph{rigid} if $(\Delta,\delta)$ is a (necessarily deterministic) dynamorphism $\mathbf{h}_0\looparrowright \mathbf{h}_1$. Otherwise, it  is called \emph{flexible}\footnote{ In \cite{Dugowson:20150807} and \cite{Dugowson:20150809}, only rigid synchronizations had been considered, while the much more general idea of flexible synchronizations appeared in \cite{Dugowson:20160831}.}.

\begin{df} A \emph{synchronization of the family $\mathcal{A}$ with conductor $i_0\in I$} is a family of synchronizations $((\Delta_i,\delta_i) 
: \mathbf{h}_{i_0}
\Rsh
\mathbf{h}_i)_{i\in I}$, with  $(\Delta_{i_0},\delta_{i_0})=Id_{\mathbf{h}_{i_0}}$.
\end{df}

 \begin{rmq}
 More complex synchronization systems could be usefully considered, which we will not do in this paper.
 \end{rmq}

\subsection{Interactive families}\label{subs familles interactives}

We can now define an \emph{interactive family}
\footnote{
\label{fnote terminology interactive family}
In \cite{Dugowson:20150809}, we used the expression "dynamical families", but this one presents a risk of confusion with the notion of "families of dynamics", and we finally prefer to use the expression ``interactive families".}  
as a family of open dynamics endowed with an interaction request (for example an interaction relation), a family of synchronizations between some of these dynamics and a third element, called \emph{privacy} or social mode, which is an  equivalence relation on the families of parametric values. More precisely: 

\begin{df}\label{df interactive family} 
We call \emph{interactive family} the data\\
\noindent $(I,\mathcal{A}, R,i_0,  (\Delta_i,\delta_i)_{i\in I}, \sim)$ of
\begin{itemize}
\item a non-empty set $I$,
\item an $I$-family $\mathcal{A}=(A_i)_{i\in I}$ of open dynamics, say 
\[A_i=({\rho_i:(\alpha_i:\mathbf{D}_i\rightharpoondown \mathbf{Tran}^{\underrightarrow{\scriptstyle{L_i}}}})\looparrowright \mathbf{h}_i),\]
\item an \emph{interaction $(R,i_0,  (\Delta_i,\delta_i)_{i\in I})$ for $\mathcal{A}$}, that is
\subitem an admissible interaction request $R\in \mathbf{MBR}^\#_{(\mathcal{Z}, \mathcal{L})}$ for $\mathcal{A}$,
\subitem an element $i_0\in I$, 
\subitem a synchronization $((\Delta_i,\delta_i) 
: \mathbf{h}_{i_0}
\Rsh
\mathbf{h}_i)_{i\in I}$ of $\mathcal{A}$ with conductor $i_0$,
\item an equivalence relation $\sim$  on the set $\Pi_I\mathcal{L}=\prod_{i\in I}{L_i}$, called the \emph{intimacy} or the \emph{social mode} of the interactive family.
\end{itemize}
\end{df}

\begin{rmq}
We'll see in section \ref{subs exm of global dynamics} the role of the intimacy of an interactive family.
\end{rmq}

An interactive family with components $\mathcal{A}$ and its interaction  $(R,i_0,  (\Delta_i,\delta_i)_{i\in I})$ are said to be \emph{normal}, \emph{efficient} or \emph{functional} if it is the case for the interaction relation $\widecheck{R}$, respectively.

Let $\mathcal{F}={(I,\mathcal{A}, R,i_0,  (\Delta_i,\delta_i)_{i\in I}, \sim)}$
and $\mathcal{G}=(I,\mathcal{A}, Q,i_0,  (\Delta_i,\delta_i)_{i\in I}, \backsim)$ be two interactive families defined on a same family $\mathcal{A}=(A_i)_{i\in I}$ of open dynamics and sharing a same synchronization $((\Delta_i,\delta_i)_{i\in I}$. If $\widecheck{Q}=\widecheck{R}$, the two interactions $Q$ and $R$ are said to be \emph{strongly equivalent}. If, in addition, the restriction of the equivalence relation $\sim_{\vert M}$ on the set $M=\mathrm{Im}(br(\widecheck{R}))\subset \Pi_I{\mathcal{L}}$ is equal to the restriction $\backsim_{\vert M}$, then the two interactive families $\mathcal{F}$ and $\mathcal{G}$ are said to be \emph{strongly equivalent}.


\subsection{Connectivity structures of an interactive family}\label{subs connectivity structures of an IF}

Even if we do not discuss in this article the notion of \emph{connective dynamics}\footnote{See \cite{Dugowson:201112}  and  \cite{Dugowson:201203}.}, it should be noted that we have developed the theory of open dynamics and their interactions as an extension of our research on connectivity spaces\footnote{See \cite{Dugowson:201306}.}. This perspective also explains that, regarding topological aspects, we emphasize the connectivity point of view\footnote{For the relation between connectivity and topology, see in particular \cite{Dugowson:20180305}.}. In the present paper we limit ourselves regarding these types of matters to defining a main connectivity structure of an interactive family and three other  connectivity structures, based on its interaction request. At this stage, we do not include in these definitions any considerations about synchronizations.

We begin with some very brief reminders about connectivity spaces and structures and about the connectivity structure of a multiple relation.
 A \emph{connectivity space}\footnote{See \cite{Borger:1983}, \cite{Dugowson:200703} and \cite{Dugowson:201012}.} $X$  is a pair $(\vert X\vert,\kappa(X))$ where $\vert X\vert$ is a set called  the \emph{carrier} of $X$ and $\mathcal{K}=\kappa(X)\subseteq \mathcal{P}(\vert X\vert)$ is called the \emph{connectivity structure} of $X$ and is such that for every $\mathcal{I}\in \mathcal{P}(\mathcal{K})$ we have the implication $\bigcap_{K\in\mathcal{I}}K\ne\emptyset\Rightarrow\bigcup_{K\in\mathcal{I}}K\in\mathcal{K}$. Every element $K\in\mathcal{K}$ is said to be a \emph{connected subset of $\vert X\vert$}, or is simply said to be \emph{connected} (to itself).  When $\vert X\vert$ is non-empty, the empty subset is always connected, because it is the union of the empty family, whose intersection is then non-empty. A connectivity space is said to be \emph{finite} when its carrier is a finite set and it said to be \emph{integral} if every singleton subset is connected. 
  The morphisms between two connectivity spaces are the functions which transform connected subsets into connected subsets.

Given some context $\mathcal{E}=(E_i)_{i\in I}$, \emph{the connectivity space of a multiple relation} $R=(J,\mathcal{E}_{\vert J}, \vert R\vert) \in \mathbf{MR}_{\subseteq\mathcal{E}}$ has been defined in  \cite{Dugowson:20150505} as the space having $J$ as carrier and having as connectivity structure the set $\mathcal{K}_R\subseteq \mathcal{P}(J)$ of subsets $K$ of $J$ that are \emph{non-splittable} for $R$, that is such that there does not exist a partition $K=K_1\sqcup K_2$ with $R_{\vert K}=R_{\vert K_1}\otimes R_{\vert K_2}$.

The notion of connectivity structure of a multiple relation naturally extends to the case of a multiple binary relation: given some context $(\mathcal{W}, \mathcal{M})$ for a given index set $I$,  \emph{the connectivity space of a multiple binary relation} $R=(J,\mathcal{W}_{\vert J}, \mathcal{M}_{\vert J}, \vert R\vert) \in \mathbf{MBR}_{\subseteq(\mathcal{W}, \mathcal{M})}$ is the connectivity space   having $J$ as carrier and having as connected subsets $K\subseteq J$ the ones that are \emph{non-splittable} for $R$, that is such that there does not exist a partition $K=K_1\sqcup K_2$ with $R_{\vert K}=R_{\vert K_1}\otimes R_{\vert K_2}$. In other words, the connectivity structure of a multiple binary relation $R\in \mathbf{MBR}_{\subseteq(\mathcal{W}, \mathcal{M})}$  is the one of the multiple relation $mr(R)\in \mathbf{MR}_{\subseteq\mathcal{E}}$.

For example, given a family $\mathcal{A}=(A_i)_{i\in I}$ of open dynamics, the connectivity structure of the interaction $\Omega=\Omega_\mathcal{A}$ is the discrete integral one\footnote{\label{footnote discrete structure} That is the structure for which the only connected parts of $I$ are the singletons  $\{i\}$ and the empty set, see \cite{Dugowson:201012}.}, because the coherence property is local, that could be written $\Omega=\bigotimes_{i\in I}\Omega_{\vert \{i\}}$.

Given $R\in\mathbf{MBR}_{(\mathcal{Z}, \mathcal{L})}$  an interaction request for a family $\mathcal{A}=(A_i)_{i\in I}$ of open dynamics --- that is\footnote{Using notations of section\,\ref{subs interaction relations in a family}.} :  $\vert R\vert\subseteq \Pi_I \mathcal{E}=\Pi_I(Z_i \times L_i )$ with $Z_i$ the set of outgoing realizations of $A_i$ and $L_i$ the set of its parameter values --- we denote by
$\underline{R}$ the $I$-multiple relation with context $\mathcal{Z}$ obtained by projection (\emph{i.e.} restriction) of $R$ on $\Pi_I \mathcal{Z}$, that is
\[
\vert \underline{R} \vert =
\{
(\sigma_i)_{i\in I}\in \Pi\mathcal{Z}, 
\exists (\lambda_i)_{i\in I}\in\Pi\mathcal{L}, 
\left(
\begin{array}{c}
\lambda_i  \\ 
\sigma_i 
\end{array} 
\right)_{i\in I}\in\vert R\vert
\}.
\]  In the same way, 
 $\underline{\widecheck{R}}$ denotes the projection of $\widecheck{R}$  on $\Pi_I \mathcal{Z}$, where we remind that $\widecheck{R}=R\cap\Omega_\mathcal{A}$ denotes the coherent part of $R$.
Then, we obtain four connectivity structures on $I$ naturally associated with the interaction $R$, that is : 
\begin{itemize}
\item $\mathcal{K}_R$, the connectivity structure of $R\in\mathbf{MBR}_{(\mathcal{Z}, \mathcal{L})}$,
\item  $\mathcal{K}_{\widecheck{R}}$, the connectivity structure of $\widecheck{R}\in\mathbf{MBR}_{(\mathcal{Z}, \mathcal{L})}$, 
\item $\mathcal{K}_{\underline{R}}$, 
the connectivity structure of $\underline{R}\in\mathbf{MR}_{\mathcal{Z}}$,
\item $\mathcal{K}_{\underline{\widecheck{R}}}$ 
the connectivity structure of $\underline{\widecheck{R}}\in\mathbf{MR}_{\mathcal{Z}}$.
\end{itemize}

\begin{prop} For any interaction request 
$R\in\mathbf{MBR}_{(\mathcal{Z}, \mathcal{L})}$ for a given family $\mathcal{A}=(A_i)_{i\in I}$ of open dynamics, we have $\mathcal{K}_{\underline{R}}\subseteq \mathcal{K}_R$ and
$\mathcal{K}_{\underline{\widecheck{R}}}
\subseteq
\mathcal{K}_{\widecheck{R}}
\subseteq
\mathcal{K}_R$.
   Moreover, if $R$ is a normal request, $\mathcal{K}_{\underline{R}}$ is the discrete integral connectivity structure\footnote{See footnote \ref{footnote discrete structure}.}, so in this case, we have
\[
\mathcal{K}_{\underline{R}}
\subseteq
\mathcal{K}_{\underline{\widecheck{R}}}
\subseteq
\mathcal{K}_{\widecheck{R}}
\subseteq
\mathcal{K}_R.
\]
\end{prop}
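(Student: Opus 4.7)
The strategy is to establish each inclusion via its contrapositive: show that any $K \subseteq I$ that splits for the ``larger'' relation also splits for the ``smaller'' one. For the two inclusions $\mathcal{K}_{\underline{R}} \subseteq \mathcal{K}_R$ and $\mathcal{K}_{\underline{\widecheck{R}}} \subseteq \mathcal{K}_{\widecheck{R}}$, the projection forgetting the $\lambda$-coordinates is a Cartesian-product map, so it commutes with both restriction and with the gluing operator $\otimes$ on disjoint arities; hence an identity $Q_{\vert K} = Q_{\vert K_1} \otimes Q_{\vert K_2}$ projects directly to $\underline{Q}_{\vert K} = \underline{Q}_{\vert K_1} \otimes \underline{Q}_{\vert K_2}$, applied with $Q = R$ and $Q = \widecheck{R}$ respectively.

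For $\mathcal{K}_{\widecheck{R}} \subseteq \mathcal{K}_R$, the key structural input is $\Omega_\mathcal{A} = \bigotimes_{i \in I} \Omega_{\vert \{i\}}$, reflecting that coherence is a pointwise condition; hence for any partition $K = K_1 \sqcup K_2$ we have $\Omega_{\vert K} = \Omega_{\vert K_1} \otimes \Omega_{\vert K_2}$. Assuming $R_{\vert K} = R_{\vert K_1} \otimes R_{\vert K_2}$, the inclusion $\widecheck{R}_{\vert K} \subseteq \widecheck{R}_{\vert K_1} \otimes \widecheck{R}_{\vert K_2}$ is automatic. For the reverse, given a glued $y$ and witnesses $x^{(1)}, x^{(2)} \in \vert\widecheck{R}\vert$ with $x^{(n)}_{\vert K_n} = y_{\vert K_n}$, one uses the $R$-splittability on $K$ together with the locality of coherence to construct a single coherent witness $w \in \vert\widecheck{R}\vert$ with $w_{\vert K} = y$. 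This step is the main obstacle: restriction does not commute with intersection in general, so $w$ must be assembled explicitly, combining the $R$-factorization on $K$ with coherent extensions on $I \setminus K$ inherited from the $x^{(n)}$, while still lying inside $\vert R\vert$.

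For the ``moreover'' part, normality $\mathrm{Def}_{br(R)} \supseteq \Pi_I \mathcal{Z}^*$ translates to $\vert\underline{R}\vert \supseteq \prod_i Z_i^*$; I would use this to show that $\underline{R}_{\vert K}$ factors across every nontrivial partition of every $K \subseteq I$ with $\vert K\vert \geq 2$, whence $\mathcal{K}_{\underline{R}}$ equals the discrete integral connectivity structure on $I$. Being the minimum among integral connectivity structures on $I$, it is automatically contained in $\mathcal{K}_{\underline{\widecheck{R}}}$, prepending to the chain already established in the previous two paragraphs.
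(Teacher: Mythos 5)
Your handling of $\mathcal{K}_{\underline{R}}\subseteq \mathcal{K}_R$ and $\mathcal{K}_{\underline{\widecheck{R}}}\subseteq \mathcal{K}_{\widecheck{R}}$ is correct and is the paper's own argument: the projection onto the $\mathcal{Z}$-coordinates commutes with restriction and, because $K_1$ and $K_2$ are disjoint, with $\otimes$, so a splitting of $Q_{\vert K}$ descends to one of $\underline{Q}_{\vert K}$. Two caveats on the ``moreover'' part, though: normality only yields $\vert\underline{R}\vert\supseteq\Pi_I\mathcal{Z}^{*}$, and that containment alone does not make $\underline{R}_{\vert K}$ factor over every partition (take $\vert\underline{R}\vert=\Pi_I\mathcal{Z}^{*}\cup\{(\underline{\emptyset}_{A_i})_{i\in I}\}$, which is normal but whose graph is not a product when the $Z_i^{*}$ are non-empty); what the paper actually uses at this point is the equality $\vert\underline{R}\vert=\Pi_I\mathcal{Z}$, so your ``I would use this to show'' hides a step that does not follow from the hypothesis as you state it.

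The genuine gap is the middle inclusion $\mathcal{K}_{\widecheck{R}}\subseteq\mathcal{K}_R$. You correctly isolate the crux --- from $y$ with coherent witnesses $x^{(1)},x^{(2)}$ over $y_{\vert K_1}$ and $y_{\vert K_2}$, produce a single $w\in\vert\widecheck{R}\vert$ with $w_{\vert K}=y$ --- but you only assert that splittability of $R_{\vert K}$ plus locality of coherence yields such a $w$, and in general it does not: splittability gives some $w\in\vert R\vert$ lying over $y$, and nothing lets you repair its coordinates outside $K$ to make them coherent while staying inside $\vert R\vert$. Concretely, let $I=\{1,2,3\}$ with $A_1=A_2=A_3=\Gamma$ (example \ref{exm Gamma}), so that $u:=(0,a)$ and $v:=(1,a)$ are coherent elements of $Z_\Gamma\times L_\Gamma$ while $e:=(0,b)$ is not, and let $\vert R\vert=\{(u,u,u),(v,v,v),(u,v,e),(v,u,e)\}$. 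Then $\vert R_{\vert\{1,2\}}\vert=\{u,v\}\times\{u,v\}$ splits, so $\{1,2\}\notin\mathcal{K}_R$, whereas $\vert\widecheck{R}\vert=\{(u,u,u),(v,v,v)\}$ gives $\vert\widecheck{R}_{\vert\{1,2\}}\vert=\{(u,u),(v,v)\}$, which does not split, so $\{1,2\}\in\mathcal{K}_{\widecheck{R}}$: the inclusion you are trying to prove fails here. So your instinct that this was ``the main obstacle'' was right, and it is not one you can get around. Note that the paper's own proof passes over it by writing $\widecheck{R}_{\vert K}=R_{\vert K}\cap\Omega_{\vert K}$, which is exactly the commutation of restriction (a projection) with intersection that you flagged as problematic; only $\widecheck{R}_{\vert K}\subseteq R_{\vert K}\cap\Omega_{\vert K}$ holds in general, and the displayed equality fails in the same example. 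The remaining inclusions of the chain are unaffected by this.
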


\begin{proof} 
Let $K\in \mathcal{K}_{\underline{R}}$.  Suppose $K\notin\mathcal{K}_R$: then there is a partition $K=K_1\sqcup K_2$ such that $R_{\vert K}=R_{\vert K_1}\otimes R_{\vert K_2}$. Then
\[
\vert \underline{R}_{\vert K}\vert=
\left\lbrace
(\sigma_k)_{k\in K}\in \Pi_K\mathcal{Z},
\exists  (\lambda_k)_{k\in K}\in \Pi_K\mathcal{L},
\forall n\in\{1,2\},
\left(
\begin{array}{c}
\lambda_k  \\ 
\sigma_k 
\end{array} 
\right)_{k\in K_n}
\in
\vert
R_{\vert K_n}
\vert
\right\rbrace,
\]
so 
$\vert \underline{R}_{\vert K}\vert=
\vert
\underline{R}_{\vert K_1}
\otimes
\underline{R}_{\vert K_2}
\vert$
that is absurd. Thus $K\in\mathcal{K}_R$, so $\mathcal{K}_{\underline{R}}\subseteq\mathcal{K}_R$. The same reasoning applied to $\widecheck{R}$ proves that $ \mathcal{K}_{\underline{\widecheck{R}}}\subseteq\mathcal{K}_{\widecheck{R}}$. 

Now, let's prove that $\mathcal{K}_{\widecheck{R}}
\subseteq\mathcal{K}_R$:
 let $K\in \mathcal{K}_{\widecheck{R}}$, and suppose $K\notin \mathcal{K}_R$. 
 Then, as previously, there is a partition $K=K_1\sqcup K_2$ such that $R_{\vert K}=R_{\vert K_1}\otimes R_{\vert K_2}$. By putting $\Omega=\Omega_\mathcal{A}$, we thus have 
 ${\widecheck{R}_{\vert K}}$ 
 $=R_{\vert K}\cap \Omega_{\vert K}$
 $=(R_{\vert K_1}\otimes R_{\vert K_2})\cap(\Omega_{\vert K_1}\otimes \Omega_{\vert K_2})$. But $\cap$ is nothing but $\otimes$ in the case of a same arity so, by associativity and commutativity, we have 
  ${\widecheck{R}_{\vert K}}$
   $=(R_{\vert K_1}\cap \Omega_{\vert K_1})\otimes( R_{\vert K_2}\cap \Omega_{\vert K_2})$
   $={\widecheck{R}_{\vert K_1}}\otimes{\widecheck{R}_{\vert K_2}}$,  which is absurd, because we assumed that $K\in \mathcal{K}_{\widecheck{R}}$. 
  
Finally, if $R$ is a normal request, then $\vert\underline{R}\vert=\mathrm{Def}_{br(R)}=\Pi_I\mathcal{Z}$, so its connectivity structure is the discrete integral one, that is finer than the others.
\end{proof}

\begin{df}
Given an interactive family
 $\mathcal{F}=(I,\mathcal{A}, R,i_0,  (\Delta_i,\delta_i)_{i\in I}, \sim)$  we call
$\mathcal{K}_\mathcal{F}:=\mathcal{K}_{\underline{\widecheck{R}}}$ the \emph{manifest connectivity structure of $\mathcal{F}$} (or simply the \emph{connectivity structure of $\mathcal{F}$}), and $\mathcal{K}_{\widecheck{R}}$ the \emph{plain connectivity structure of $\mathcal{F}$}.
\end{df}

 
\subsection{Examples of interactive families}\label{subs examples of interactive families}\label{subs exm of Interactive families}

\begin{exm}[The $\mathbb{WHY}$ = \textcjheb{why} family] \label{exm WHY family}
As a first example of an interactive family, let us recall the $\mathbb{WHY}$  family, 
also denoted by \textcjheb{why}, 
that we have introduced in \cite{Dugowson:20160831} and that we have also described in  \cite{DugowsonKlein:20190417}, 
on the occasion of our work with philosopher Pierre Michel Klein concerning his philosophical theory of time, \emph{Metachronology} \cite{KleinPM:2014}. 
As its name suggests, this family involves  the open dynamics $\mathbb{Y}=  \textcjheb{y}$, $\mathbb{H}=\textcjheb{h}$ and $\mathbb{W} =  \textcjheb{w}$ (cf. \emph{supra} examples \ref{exm Iod}, \ref{exm H0}  and \ref{exm Vav}). More precisely, it is defined by 
$\mathbb{WHY}=(I,\mathcal{A}, Q,i_0,  (\Delta_i,\delta_i)_{i\in I}, \sim)$ with
\begin{itemize}
\item $I=\{1,2,3\}$,
\item $\mathcal{A}=(A_i)_{i\in I}$ where $A_1=\mathbb{Y}$, $A_2=\mathbb{H}$ with the origin of times being taken equal to $T_0=0$ for simplicity, and $A_3=\mathbb{W}$, 
\item the graph of the interaction request $Q$ for $\mathcal{A}$ contains all the families
\[
\left(
\begin{array}{ccc}
\lambda_1=* & {\lambda_2} \in L_\mathbb{H}& \lambda_3\in L_\mathbb{W}=\mathcal{C} \\ 
{\sigma_1} \in Z_\mathbb{Y}^* &  \sigma_2\in Z_\mathbb{H}^* & \sigma_3\in  Z_\mathbb{W}^*=\mathcal{C} 
\end{array}	 
\right)
\]
such that $\lambda_3=\sigma_2$ and ${\lambda_2}$ is the restriction of ${\sigma_1}$ to the interior of its domain of definition $\mathrm{Def}_{{\sigma_1}}$, 
\item the conductor is given by $i_0=2$, 
\item $\Delta_1=Id_{\mathbf{R}_+}$ and
$\delta_1:st(\mathbf{h}_\mathbb{H})=]0,+\infty[$ 
$\hookrightarrow$
$[0,+\infty[=st(\mathbf{h}_\mathbb{Y})$, the inclusion map,
\item  $\Delta_3=(\mathbf{R}_+ \stackrel{!}{\rightarrow} \mathbf{1})$ and $\delta_3:st(\mathbf{h}_\mathbb{H})\stackrel{!}{\rightarrow} \{\bullet\}=st(\mathbf{h}_\mathbb{W})$, which is necessarily constant,
\item the social mode $\sim$ (that was not included in our previous definitions of an interactive family) is taken equal to the maximal equivalence relation on $\Pi \mathcal{L}=\{*\}\times L_\mathbb{H}\times L_\mathbb{W}$, \emph{i.e} $\mu \sim \nu$ for all $\mu$ and $\nu$ in $\Pi \mathcal{L}$.
\end{itemize}

Note that the interaction request $Q$ is normal, 
and that the manifest connectivity structure $\mathcal{K}_{\mathbb{WHY}}=\mathcal{K}_{\underline{\widecheck{Q}}}$  is the indiscrete one, that is $\mathcal{K}_{\mathbb{WHY}}=\mathcal{P}(I)$.
\end{exm}

%

\begin{exm}[A borromean family]\label{exm a borromean family} Our second example of an interactive family is 
$\mathcal{F}=(I,\mathcal{A}, Q,i_0,  (\Delta_i,\delta_i)_{i\in I},\sim)$ with
\begin{itemize}
\item $I=\{1,2,3\}$,
\item $\mathcal{A}=(A_i)_{i\in I}$ with, for each $i\in I$, $A_i=\Upsilon$, the open dynamic given in the example \ref{exm Upsilon},
\item the graph of the interaction request $Q$ for $\mathcal{A}$ contains all the families
\[
\left(
\begin{array}{ccc}
\lambda_1  & \lambda_2 & \lambda_3 \\ 
\sigma_1   &  \sigma_2 & \sigma_3
\end{array}	 
\right)
\in (Z_\Upsilon \times L_\Upsilon)^3
\]
that satisfy $\{i\in I, \lambda_i(0)=1\}\neq\emptyset$,
\item the conductor is given by $i_0=1$, 
\item  $\delta_i=Id_{\{t_0, t_1\}}$ (and $\Delta_i=Id_{\{T_0, T_1\}}$) for every $i\in I$,
\item the intimacy (social mode) $\sim$ is defined on $L_\Upsilon^3$ by 
\[
(\lambda_1,\lambda_2,\lambda_3)\sim (\mu_1,\mu_2,\mu_3) \Leftrightarrow \lambda_1(0)=\mu_1(0).
\]
\end{itemize}

Note that we can write 
\[
\vert Q\vert
=
\left\lbrace
\left(
\begin{array}{ccc}
\lambda_1  & \lambda_2 & \lambda_3 \\ 
\sigma_1   &  \sigma_2 & \sigma_3
\end{array}	 
\right)
\in (Z_\Upsilon \times L_\Upsilon)^3,
\{\lambda_1, \lambda_2, \lambda_3\}\cap \{\varphi_{10},\varphi_{11}\}\neq\emptyset
\right\rbrace,
\]
where  $\varphi_{kl}$ denotes the map $\{0,1\}\rightarrow\{0,1\}$ such that $\varphi_{kl}(0)=k$ and $\varphi_{kl}(1)=l$, so that we have
$L_\Upsilon=
\left\lbrace 
\varphi_{00},\varphi_{01},\varphi_{10},\varphi_{11}
\right\rbrace$. 

The interaction request $Q$ is obviously normal, and it is easy to see 
that the manifest connectivity structure $\mathcal{K}_{\mathcal{F}}=\mathcal{K}_{\underline{\widecheck{Q}}}$  and the plain connectivity structure $\mathcal{K}_{{\widecheck{Q}}}$ are both the integral borromean one\footnote{Cf. \cite{Dugowson:201012}.}, that is $\mathcal{K}_{\mathcal{F}}=\mathcal{P}(I)\setminus \{\{1,2\},\{2,3\}, \{1,3\}\}$.
\end{exm}

\section{Global dynamics}\label{sec Dyn Produced}

In this section, we associate with any interactive family some global dynamics, \emph{i.e.} some open dynamics produced by the family in question in order to incorporate in a certain way the different dynamics composing the family. 
The difference between these global dynamics lies in the choice of the social mode applied : if it is the social mode belonging to the family itself, we obtain the one we'll call  the global dynamic \emph{demanded} by the considered interactive family. But other choices of a social mode can be made, starting with the trivial equivalence relation (equality) which leads to what we  call the \emph{transparent global dynamic}, on which other global dynamics are modelled.
 Among other possibilities, we  also define the \emph{responsible global dynamic} and the \emph{$J$-global dynamic} --- which results from the choice of the set $J$ of indices for which the parameter values can be determined from the outside --- and we finally introduce the most ``closed" global dynamic (\emph{i.\,e.} a mono-dynamic that cannot be influenced (normally) by some other dynamics), which we  call the \emph{opaque global dynamic} generated by the family.

\subsection{The lax-functorial stability theorem}


\begin{thm}[Lax-functorial Stability Theorem]\label{thm stability}  Let $\mathcal{F}=(I,\mathcal{A}, R, i_0, (\Delta_i,\delta_i)_{i\in I})$ be an interactive family, with $\mathcal{A}=(A_i)_{i\in I}$ and, for each $i\in I$
\[A_i=({\rho_i:(\alpha_i:\mathbf{D}_i\rightharpoondown \mathbf{Tran}^{\underrightarrow{\scriptstyle{L_i}}}})\looparrowright \mathbf{h}_i),\] and let 
$\mathbf{E}=\mathbf{D}_{i_0}$ and
$M=\mathrm{Im}(br(\widecheck{R}))$. 
Then we obtain an $M$-dynamic $\beta:$ $\mathbf{E}\rightharpoondown\mathbf{Tran}^{\underrightarrow{\scriptstyle{M}}}$ by putting for every $S\in \dot{\mathbf{E}}$
\[S^\beta
=
\{(a_i)_{i\in I}\in \prod_{i\in I}(\Delta_i S)^{\alpha_i},
\forall i\in I, \rho_{i}(a_i)=\delta_i(\rho_{{i_0}}(a_{i_0}))\},\]  and, for every $(d:S\rightarrow T)\in \overrightarrow{\mathbf{E}}$,  $a=(a_i)_{i\in I}\in S^\beta$ and  $\mu\in M$, by defining 
$d^\beta_\mu(a)$ as the set of the $b=(b_i)_{i\in I}\in T^\beta$ such that  
\begin{equation}\label{eq 2 real}
\exists (\sigma_i)_{i\in I}\in  br(\widecheck{R})^{-1}(\mu),
\forall i\in I, \sigma_i\triangleright a_i, b_i
\end{equation}
and
\begin{equation}\label{eq 1 sync}
\rho_{{i_0}}(b_{i_0})=d^{\mathbf{h}_{i_0}}(\rho_{{i_0}}(a_{i_0})).
\end{equation}
\end{thm}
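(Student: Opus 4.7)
The plan is to verify the three axioms defining an $M$-dynamic on $\mathbf{E}$ for the candidate $\beta$: disjunctivity of the state assignment $S \mapsto S^\beta$, the lax identity $(Id_S)^\beta_\mu \subseteq Id_{S^\beta}$, and the lax composition $(e \circ d)^\beta_\mu \subseteq e^\beta_\mu \odot d^\beta_\mu$. Disjunctivity is immediate from the conductor convention $(\Delta_{i_0},\delta_{i_0}) = Id_{\mathbf{h}_{i_0}}$: any $a = (a_i)_{i \in I} \in S^\beta$ has $a_{i_0} \in (\Delta_{i_0}S)^{\alpha_{i_0}} = S^{\alpha_{i_0}}$, so the disjunctivity of $\alpha_{i_0}$ on its engine $\mathbf{D}_{i_0} = \mathbf{E}$ transfers at once to $\beta$.

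For the lax identity I would fix $a \in S^\beta$ and $b \in (Id_S)^\beta_\mu(a)$; the synchronization equality (\ref{eq 1 sync}) gives $\rho_{i_0}(b_{i_0}) = \rho_{i_0}(a_{i_0})$, and the compatibility relation defining $S^\beta$ then forces $\rho_i(b_i) = \delta_i(\rho_{i_0}(a_{i_0})) = \rho_i(a_i)$ for every $i$. Choosing a witness $(\sigma_i) \in br(\widecheck{R})^{-1}(\mu)$ with $\sigma_i \rhd a_i, b_i$ and using that each realization $\sigma_i$ is a hyper-deterministic partial function satisfying $\sigma_i(\rho_i(x)) = x$ on any state $x$ it passes through, I obtain $a_i = \sigma_i(\rho_i(a_i)) = \sigma_i(\rho_i(b_i)) = b_i$, hence $b = a$.

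The lax composition is where the main work lies. Given composable $d:S\to T$ and $e:T\to U$ in $\mathbf{E}$, $a \in S^\beta$ and $c \in (e\circ d)^\beta_\mu(a)$, the goal is to exhibit a mediating $b \in T^\beta$ with $b \in d^\beta_\mu(a)$ and $c \in e^\beta_\mu(b)$, both witnessed by the same tuple. Fixing $(\sigma_i) \in br(\widecheck{R})^{-1}(\mu)$ with $\sigma_i \rhd a_i, c_i$ and setting $t_0 = \rho_{i_0}(a_{i_0})$, $t_1 = d^{\mathbf{h}_{i_0}}(t_0)$, $t_2 = e^{\mathbf{h}_{i_0}}(t_1) = \rho_{i_0}(c_{i_0})$, I would define $b_i := \sigma_i(\delta_i(t_1))$. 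The delicate step is to justify $\delta_i(t_1) \in \mathrm{Def}_{\sigma_i}$, and here the monotonicity of $\delta_i$ combines with the third realization axiom (past-definedness from future-definedness): in the increasing case $\delta_i(t_1) \leq_{\mathbf{h}_i} \delta_i(t_2)$, so some arrow of $\mathbf{D}_i$ carries $\delta_i(t_1)$ to $\delta_i(t_2) = \rho_i(c_i)$, at which $\sigma_i$ is defined, so $\delta_i(t_1) \in \mathrm{Def}_{\sigma_i}$; the decreasing case is symmetric, using definedness at $\delta_i(t_0) = \rho_i(a_i)$.

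Once $b$ is built, the remaining verifications are routine. The type condition $b_i \in (\Delta_i T)^{\alpha_i}$ follows from axiom (2) of a realization together with the compatibility of $(\Delta_i, \delta_i)$, the identity $\rho_i(b_i) = \delta_i(\rho_{i_0}(b_{i_0}))$ from axiom (1) applied at $\delta_i(t_1)$, and $\sigma_i \rhd b_i$ from the very definition of $b_i$. The datation equalities $\rho_{i_0}(b_{i_0}) = t_1 = d^{\mathbf{h}_{i_0}}(\rho_{i_0}(a_{i_0}))$ and $\rho_{i_0}(c_{i_0}) = t_2 = e^{\mathbf{h}_{i_0}}(\rho_{i_0}(b_{i_0}))$ then yield $b \in d^\beta_\mu(a)$ and $c \in e^\beta_\mu(b)$, proving $c \in (e^\beta_\mu \odot d^\beta_\mu)(a)$ and completing the verification.
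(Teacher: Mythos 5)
Your proposal is correct and follows essentially the same route as the paper's own proof: disjunctivity via the conductor convention, lax identity via the datation equalities plus the fact that a realization passing through two states with the same date must send them to the same state, and lax composition by constructing the mediating state $b_i=\sigma_i(\delta_i(t_1))$ and justifying $\delta_i(t_1)\in\mathrm{Def}_{\sigma_i}$ from the monotonicity of $\delta_i$ together with the third realization axiom. The only point the paper makes that you omit is the preliminary observation that $M\neq\emptyset$ (from the admissibility of $R$), which is needed for $\beta$ to qualify as an $M$-dynamic since parametric sets are required to be non-empty.
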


\begin{proof}
First, we have $M\neq\emptyset$, because $R$ is an admissible request so $P=\widecheck{R}$ and $M=\mathrm{Im}(br(P))$ are not empty. Then, following the section §\,\ref{subsubs LdynD}, we have to check that $\beta$ is a disjunctive lax-functor, \emph{i.e.} that these three conditions are satisfied:
\begin{enumerate}
\item (Disjunctivity) $\forall(S,T)\in\dot{\mathbf{E}}^2$, $S\neq T \Rightarrow S^\beta \cap T^\beta=\emptyset$,
\item (Lax identity) $\forall S\in\dot{\mathbf{E}}, \forall \mu\in M, (Id_S)^\beta_\mu\subseteq Id_{S^\beta}$,
\item (Lax composition) for every $(S{\stackrel{d}{\rightarrow}}T{\stackrel{e}{\rightarrow}}U)$ in $\mathbf{E}$ and every  $\mu\in M$, 
\[(e\circ d)^\beta_\mu\subseteq e^\beta_\mu \odot d^\beta_\mu.\]
\end{enumerate}

\subparagraph{1. Disjunctivity.} Suppose $S\neq T$  but $S^\beta\cap T^\beta\neq\emptyset$, then we would have an element
$(a_i)_{i\in I}\in S^\beta\cap T^\beta$ and  --- as $\Delta_{i_0}=Id_{\mathbf{D}_{i_0}}$ and $\alpha_{i_0}$ is disjunctive --- we would have $a_{i_0}\in S^{\alpha_{i_0}}\cap T^{\alpha_{i_0}}=\emptyset$ , that is absurd.

\subparagraph{2. Lax identity.} Let $S\in\dot{\mathbf{E}}$ and $\mu\in M$.
We want to check that $(Id_S)^\beta_\mu\subseteq Id_{S^\beta}$. In other words, we want to check that if $S^\beta\neq\emptyset$ and $a={(a_i)_{i\in I}}\in S^\beta$ then $(Id_S)^\beta_\mu(a)\subseteq \{a\}$, that is $(Id_S)^\beta_\mu(a)=\emptyset$ or $(Id_S)^\beta_\mu(a)=\{a\}$. But if $(Id_S)^\beta_\mu(a)$ is not empty and $a'={(a'_i)_{i\in I}}$ is an element of it, then 
for every $i\in I$, there is an outgoing realization $\sigma_i\in Z_{i}$ such that $\sigma_i\rhd a_i,a'_i$ and then --- using  $\sigma_i\rhd a'_i$, the definition of $S^\beta$, the  condition (\ref{eq 1 sync}) and $\sigma_i\rhd a_i$ ---
we have
\[
a'_i
= \sigma_i(\rho_i(a'_i))
=\sigma_i(\delta_i(\rho_{i_0}(a'_{i_0})))
=\sigma_i(\delta_i(\rho_{i_0}(a_{i_0})))
= \sigma_i(\rho_i(a_i))
=a_i.
\]
Thus $a'=a$, and we have proved that
$(Id_S)^\beta_\mu\subseteq Id_{S^\beta}$.

\subparagraph{3. Lax composition.} We have to check that given any $(S{\stackrel{d}{\rightarrow}}T{\stackrel{e}{\rightarrow}}U)$ in $\mathbf{E}$, any
$\mu\in M$ and any state $a=(a_i)_{i\in I}\in S^\beta$, we have
$(e\circ d)^\beta_\mu(a)\subseteq (e^\beta_\mu \odot d^\beta_\mu) (a)
$. In other words, supposing $(e\circ d)^\beta_\mu(a)$ not empty and taking any $c=(c_i)_{i\in I}\in(e\circ d)^\beta_\mu(a)\subseteq U^\beta$, we have to prove the existence of a state $b\in d^\beta_\mu(a)\subseteq T^\beta$ such that $c\in e^\beta_\mu(b)$.

To express such a state $b=(b_i)_{i\in I}$, let us set $t_0:=\rho_{i_0}(a_{i_0})\in S^{\mathbf{h}_{i_0}}$, 
 $t_1:=d^{\mathbf{h}_{i_0}}(t_0)\in T^{\mathbf{h}_{i_0}}$,
and $t_2:=e^{\mathbf{h}_{i_0}}(t_1)\in U^{\mathbf{h}_{i_0}}$.
Note that by the definition of $(e\circ d)^\beta_\mu(a)$ we also have $t_2=\rho_{i_0}(c_{i_0})$ and that there exists a family $(\sigma_i)_{i\in I}\in br(\widecheck{R})^{-1}(\mu)$ of outgoing realizations such that $\sigma_i \rhd a_i,c_i$ for every $i\in I$. Given $(\sigma_i)_{i\in I}$ such a family, it suffices now to prove that each $\sigma_i$ is defined for the instant $\delta_i(t_1)$, and that $b_i:= \sigma_i(\delta_i(t_1))$ is a suitable choice.
Note first that since ${\sigma_{i_0} \rhd c_{i_0}}$, we have $\rho_{i_0}(c_{i_0})\in \mathrm{Def}_{\sigma_{i_0}}$, that is $t_2\in\mathrm{Def}_{\sigma_{i_0}}$. But $t_1\leq_{\mathbf{h}_{i_0}}t_2$, 
so, according to the properties of realizations (see section §\,\ref{subsubs realizations of an open dynamic}),
 $\delta_{i_0}(t_1)=t_1\in \mathrm{Def}_{\sigma_{i_0}}$. 
 
Let us now consider the case of an $i\neq {i_0}$. By definition of a synchronization, the map  $\delta_i$ is either increasing or decreasing. If it is increasing, then $\delta_i(t_1)\leq_{\mathbf{h}_i} \delta_i(t_2)$, but $c_i=\sigma_i(\rho_i(c_i))=\sigma_i(\delta_i(t_2))$, so
 $\delta_i(t_1)\in \mathrm{Def}_{\sigma_{i}}$. If $\delta_i$ is decreasing, then  $\delta_i(t_1)\leq_{\mathbf{h}_i} \delta_i(t_0)$, but $a_i=\sigma_i(\rho_i(a_i))=\sigma_i(\delta_i(t_0))$, so $\delta_i(t_0)\in \mathrm{Def}_{\sigma_{i}}$  and thus we have again $\delta_i(t_1)\in \mathrm{Def}_{\sigma_{i}}$. Now, let's put $b_i=\sigma_i(\delta_i(t_1))$ for every $i\in I$.
Then $b\in d^\beta_\mu(a)$, since
\begin{itemize}
\item for every $i\in I$, $t_1\in T^{\mathbf{h}_{i_0}}\Rightarrow \delta_i(t_1)\in (\Delta_i T)^{\mathbf{h}_i}$, and then $b_i=\sigma_i(\delta_i(t_1))\in  (\Delta_i T)^{\alpha_i}$,
\item by definition of a realization $\rho_{i_0}(b_{i_0})=\rho_{i_0}(\sigma_{i_0}(t_1))=t_1$ and, for every  $i\in I$,
$
\rho_i(b_i)
=\rho_i(\sigma_i(\delta_i(t_1)))
=\delta_i(t_1)
=\delta_i(\rho_{i_0}(b_{i_0}))$,
\item by construction, we have $\sigma_i\rhd a_i, b_i$ for every $i\in I$.
\end{itemize}

But we also have $c\in e^\beta_\mu(b)$, since
\begin{itemize}
\item $c\in U^\beta$,
\item $\rho_{i_0}(c_{i_0})=t_2=(e\circ d)^{\mathbf{h}_{i_0}}(t_0)=e^{\mathbf{h}_{i_0}}(t_1)=e^{\mathbf{h}_{i_0}}(\rho_{i_0}(b_{i_0}))$,
\item and for all $i\in I$, $\sigma_i\rhd b_i,c_i$,
\end{itemize}
and this concludes the proof.
\end{proof} 

\subsection{The transparent global dynamic}

Thanks to the theorem \ref{thm stability}, it is immediate to check that the definition below is consistent.  


\begin{df} Using the same notations than above, the \emph{transparent global dynamic associated with an interactive family $\mathcal{F}$} is the open dynamics denoted $[\mathcal{F}]_1$ defined by
\[
[\mathcal{F}]_1= 
\left(
(
\beta:\mathbf{E}\rightharpoondown\mathbf{Tran}^{\underrightarrow{\scriptstyle{M}}}
)
\stackrel{\tau}{\looparrowright}  
(\mathbf{k}:\mathbf{E}\rightarrow \mathbf{Sets})
\right)
\]
where $\beta$ and thus, in particular, $M$ and $\mathbf{E}$, are the one associated with $\mathcal{F}$ by the theorem \ref{thm stability}, the clock $\mathbf{k}$ is given by $\mathbf{k}=\mathbf{h}_{i_0}$ and the datation 
 $\tau:st(\beta)\rightarrow st(\mathbf{k})$ is defined by
\[\forall S\in\dot{\mathbf{E}}, \forall a=(a_i)_{i\in I}\in S^\beta, \tau(a)=\rho_{i_0}(a_{i_0}).\] 
\end{df}


\subsection{The demanded global dynamic}\label{subs demanded global dynamics}

The parametric set $M$ of the transparent global dynamic $[\mathcal{F}]_1$ associated with an interactive family $\mathcal{F}$ is generally ``too big"  in the sense that very often some parts of the parametric values are not intended to be externally controlled and should instead be determined by the realizations of the dynamics that compose the interactive family itself. The social mode (or intimacy) $\sim$ of $\mathcal{F}$ --- which does not play a role in the definition of the transparent global dynamic --- is precisely used to ``reduce" the parametric set, thanks to the notion of the parametric quotient of an open dynamic by an equivalence relation on the set of parametric values (see definition \ref{df reduc param}). The response of the new global dynamic thus obtained is the same for two distinct parametric values, as long as they are equivalent: it is up to it to take into account, or not, the requests made to it from outside, by constructing its response on all the possibilities given to it by the different equivalent parametric values of a same equivalence class. 

\begin{df} 
Using the same notations than above, and denoting again $\sim$ the restriction of the intimacy $\sim$ of $\mathcal{F}$ to the subset $M=\mathrm{Im}(br(\widecheck{R}))\subseteq \Pi_I\mathcal{L}$, the  global dynamic \emph{demanded} by $\mathcal{F}$ --- also called the \emph{demanded global dynamic of $\mathcal{F}$} --- is defined as the open dynamic denoted $[\mathcal{F}]_{\sim}$ given by
$
[\mathcal{F}]_{\sim}= [\mathcal{F}]_{1}/{\sim}.
$
\end{df}

\subsection{The responsible global dynamic}\label{subs functional global dynamic}

In this section, we associate with any interactive request an intimacy called ``responsible intimacy" that intuitively allows the interactive family to choose the parametric values of each dynamic at stake when these values are susceptible to be determined by the realizations of the other dynamics of the family. More precisely, using the same notations as previously, if $Q$ designates a (not necessarily coherent) admissible interaction request 
for a family $\mathcal{A}=(A_i)_{i\in I}$ of open dynamics, we define the responsible intimacy  $\asymp_Q$  for $Q$ as the equivalence relation on $\Pi_I\mathcal{L}$ setting, for any  $((\mu_i)_{i\in I},(\lambda_i)_{i\in I})\in (\Pi_I\mathcal{L})^2$,
${(\mu_i)_{i\in I}\asymp_Q(\lambda_i)_{i\in I}}$ iff we have, for all $i\in I$: ($\mu_i=\lambda_i$ or $\mu_i\in N_i\ni\lambda_i$), where $N_i\subseteq L_i$ is defined as the set 
\[
N_i:=
\left\lbrace
l\in L_i, 
\forall (\mathfrak{p},\mathfrak{q})\in\vert Q\vert^2,
\left(
\mathfrak{p}^i=l
\,\mathrm{and}\,
\forall k\neq i, \mathfrak{p}_k=\mathfrak{q}_k
\right)
\Rightarrow 
\mathfrak{q}^i=l
\right\rbrace.
\]

The \emph{responsible global dynamic}  $[\mathcal{F}]_{\asymp_Q}$   generated by an interactive family $\mathcal{F}=(I,\mathcal{A}, Q,$ $i_0,  (\Delta_i,\delta_i)_{i\in I}, \sim)$ is then defined as the demanded global dynamic of the interactive family $(I,\mathcal{A}, Q,i_0,  (\Delta_i,\delta_i)_{i\in I}, \asymp_Q)$. Note that  $[\mathcal{F}]_{\asymp_Q}$ does not depend on the social  mode $\sim$ demanded by $\mathcal{F}$ itself, and that two different interaction requests $Q$ and $R$ can result in two different social modes ${\asymp_Q}$ and ${\asymp_R}$ even if $\widecheck{Q}=\widecheck{R}$.

\subsection{The $J$-global dynamic}\label{subs J-global dynamics} Let $\mathcal{F}=(I,\mathcal{A}, Q,$ $i_0,  (\Delta_i,\delta_i)_{i\in I}, \sim)$  be an interactive family as previously, and let $J\subseteq I$ a subset whose elements $j\in J$ intuitively represent the indices such that the corresponding dynamics $A_j$  could be influenced from outside the global dynamic we want to define. To achieve this, we can use a similar construction to the one we described for the responsible global dynamic, but taking for each $i\in I$ the set $N_i$ given by: $N_i=\emptyset$ if $i\in J$ and $N_i=L_i$ if $i\notin J$. In this way, we get a global dynamics $[\mathcal{F}]_{\sim_J}$ that we'll call the $J$-global dynamic associated with $\mathcal{F}$. When $J=I$, we obtain the transparent global dynamic  $[\mathcal{F}]_{1}=[\mathcal{F}]_{\sim_I}$   associated with $\mathcal{F}$.

\subsection{The opaque global dynamic}\label{subs required global dynamics}

The transparent global dynamic $[\mathcal{F}]_1$ is the ``most open" of the global dynamics associated with an interactive family (so much so that it is generally ``too open"). At the other end, the \emph{opaque global dynamic} is the most ``closed" of them, since its parametric set is reduced to a singleton. It is obtained by making the quotient of $[\mathcal{F}]_1$ by the maximum equivalence relationship on $M$, for which $M$ is the only equivalence class. Denoting again $M$ this equivalence relation, we thus have:

\begin{df} 
The \emph{opaque global dynamic associated with $\mathcal{F}$} is the open mono-dynamic denoted $[\mathcal{F}]_{0}$ defined by
$
[\mathcal{F}]_{0}= [\mathcal{F}]_{1}/{M}.
$
\end{df}

In other words, the opaque global dynamic generated by $\mathcal{F}$ is its $\emptyset$-global dynamic: $[\mathcal{F}]_{0}=[\mathcal{F}]_{\sim_\emptyset}$. 
The following proposition immediately follows  from the definitions:
\begin{prop}
If the interaction request of an interactive family is strongly functional, then its responsible global dynamic is the opaque one.
\end{prop}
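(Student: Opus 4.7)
The plan is to show that under strong functionality of the interaction request $Q$, the restriction of the responsible intimacy $\asymp_{Q}$ to $M = \mathrm{Im}(br(\widecheck{Q}))$ coincides with the maximal equivalence relation on $M$. Once this is established, the responsible global dynamic $[\mathcal{F}]_{\asymp_{Q}} = [\mathcal{F}]_{1}/{\asymp_{Q}}$ (viewed, as in \textbf{§\,\ref{subs demanded global dynamics}}, with $\asymp_{Q}$ restricted to $M$) is literally the same quotient of $[\mathcal{F}]_{1}$ as the opaque global dynamic $[\mathcal{F}]_{0} = [\mathcal{F}]_{1}/M$, and the conclusion follows.

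First I would unpack the sets $N_{i}\subseteq L_{i}$ appearing in the definition of $\asymp_{Q}$ (see \textbf{§\,\ref{subs functional global dynamic}}) and reduce the claim to showing that for every $\mu \in M$ and every $i\in I$, one has $\mu_{i} \in N_{i}$. Indeed, if this holds, then for any $\mu,\lambda \in M$ and any $i$, we have $\mu_{i}, \lambda_{i} \in N_{i}$, whence $\mu_{i} \in N_{i} \ni \lambda_{i}$, so $\mu \asymp_{Q} \lambda$ by the coordinate-wise definition.

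The key step, which I expect to be the main point (though not really an obstacle once the definitions are lined up), is to extract $\mu_{i}\in N_{i}$ directly from strong functionality. Fix $\mu\in M$, $i\in I$, and set $l := \mu_{i}$. Since $M \subseteq \mathrm{Im}(br(Q))$ (because $\widecheck{Q}\subseteq Q$), there exists some $\mathfrak{q}_{0}\in|Q|$ with $\mathfrak{q}_{0}^{i}=l$. To verify $l\in N_{i}$, take any $(\mathfrak{p},\mathfrak{q})\in|Q|^{2}$ such that $\mathfrak{p}^{i}=l$ and $\mathfrak{p}_{k}=\mathfrak{q}_{k}$ for every $k\ne i$. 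Setting $(\sigma_{k})_{k\ne i} := (\mathfrak{p}_{k})_{k\ne i} = (\mathfrak{q}_{k})_{k\ne i}\in\Pi_{k\ne i}Z_{k}$, both parameter values $\mathfrak{p}^{i}=l$ and $\mathfrak{q}^{i}$ belong to the set
\[
\{\nu\in L_{i},\ \exists \mathfrak{r}\in|Q|,\ (\forall k\ne i,\ \mathfrak{r}_{k}=\sigma_{k})\ \text{and}\ \mathfrak{r}^{i}=\nu\}.
\]
By strong functionality of $Q$ this set has cardinality at most $1$, so $\mathfrak{q}^{i}=l$. Hence $l\in N_{i}$, as required.

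Putting the pieces together: for every $\mu,\lambda \in M$ and every $i\in I$, both $\mu_{i}$ and $\lambda_{i}$ lie in $N_{i}$, so $\mu\asymp_{Q}\lambda$. Thus $\asymp_{Q}$ restricted to $M$ has a single equivalence class, and therefore coincides with the maximal equivalence relation on $M$ used to define $[\mathcal{F}]_{0}$. Since the responsible global dynamic is by definition the parametric quotient $[\mathcal{F}]_{1}/{\asymp_{Q}}$ (see Definition \ref{df reduc param} and \textbf{§\,\ref{subs functional global dynamic}}), this gives $[\mathcal{F}]_{\asymp_{Q}} = [\mathcal{F}]_{1}/M = [\mathcal{F}]_{0}$, i.e.\ the responsible global dynamic of $\mathcal{F}$ is its opaque global dynamic.
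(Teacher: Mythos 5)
Your proof is correct, and it is precisely the verification the paper has in mind: the paper offers no written argument beyond the remark that the proposition ``immediately follows from the definitions,'' and your unpacking --- strong functionality forces every $l\in L_i$ into $N_i$, hence $\asymp_Q$ restricted to $M$ is the maximal equivalence relation, hence the two quotients of $[\mathcal{F}]_1$ coincide --- is exactly that immediate deduction made explicit. (A minor remark: the existence of $\mathfrak{q}_0\in\vert Q\vert$ with $\mathfrak{q}_0^i=l$ is not actually needed, since the defining condition of $N_i$ would hold vacuously otherwise.)
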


\begin{rmq}
With a strongly functional interaction request, the responsible global dynamic (which in this case is the opaque one) generated by a family can be non-deterministic even if all the dynamics at stake are deterministic  (for example if the request is that each dynamic has a similar behavior than another, then each one can be entirely determined by the others without the responsible global dynamic being deterministic). This shows a certain instability of determinism.
\end{rmq}

\subsection{Examples of global dynamics}\label{subs exm of global dynamics}

The examples in this section are the global dynamics associated with the interactive families proposed as examples in the section §\,\ref{subs examples of interactive families}.

\begin{example}[The global dynamic of the $\mathbb{WHY}$ family]
 Like in the example \ref{exm Vav}, given two (partial) functions $a$ and $b$ on $\mathbf{R}$, we use the notation $a\diamondsuit b$ to say that $a_{\vert \mathrm{Def}_a\cap \mathrm{Def}_b}=b_{\vert \mathrm{Def}_a\cap \mathrm{Def}_b}$. Then one checks that 
 the interactive family $\mathbb{WHY}$  of the example \ref{exm WHY family} generates as global dynamic the opaque global dynamic $\mathbb{S}=[\mathbb{WHY}]_\sim$ given by
\[
\mathbb{S}=
\left(
(
\alpha_\mathbb{S}:\mathbf{D}_\mathbb{S}\rightharpoondown\mathbf{Tran}
)
\stackrel{\tau}{\looparrowright}  
\mathbf{h}_\mathbb{S}
\right)
\]
where
\begin{itemize}
\item $\mathbf{D}_\mathbb{S}=\mathbf{D}_\mathbb{H}=(\mathbf{R}_+,+)$,
\item $\mathbf{h}_\mathbb{S}=\mathbf{h}_\mathbb{H}$, so  $st(\mathbf{h}_\mathbb{S})=]0,+\infty[$ and for all $t\in ]0,+\infty[$  and all $d\in\mathbf{R}_+$, we have $d^{\mathbf{h}_\mathbb{S}}(t)=t+d$,
\item $st(\mathbb{S})=
\{(t,r,f,w)
\in 
\mathbf{R}_+^*
\times 
\mathbf{R}
\times
\mathcal{C}^1
\times
\mathcal{C}, 
\mathrm{Def}_f=]-\infty, t[
\}$,
\item $\forall (t,r,f,w)\in st(\mathbb{S}), \tau(t,r,f,w)=t$,
\end{itemize}
and such that a state $(t,r,f,w)\in st(\mathbb{S})$ is onside iff $r=f(t^-):=\lim_{s\rightarrow t^-}f(s)$, $f_{\vert [0,t[}\in Lip_1([0,t[)$ and $f\diamondsuit w$, and that in this case the set $d^{\alpha_\mathbb{S}}(t,r,f,w)$ is given for any duration $d\in\mathbf{R}_+$ as the set of all states $(t+d,q,g,w)$ such that 
\begin{itemize}
\item  $g_{\vert ]-\infty,t[}=f$,
\item $q=g((t+d)^-)$,
\item $g_{\vert [0,t+d[}\in Lip_1([0,t+d[)$,
\item $g \diamondsuit w$.
\end{itemize}

\end{example} 

%
%
%
%

\begin{example}[The global dynamic of a borromean family]
One checks that the borromean interactive family considered in the example \ref{exm a borromean family} results in a functorial global dynamics isomorphic (in $\mathbf{ODyn}$) to the one given by

\[
\mathbb{U}=
\left(
(\mathbf{u}:\mathbf{D}_\Upsilon\rightarrow \mathbf{Tran}^{\underrightarrow{\scriptstyle{M_\mathbb{U}}}}) 
\stackrel{!}{\looparrowright}  
(\zeta_{\mathbf{D}_\Upsilon}:{\mathbf{D}_\Upsilon}\rightarrow \mathbf{Sets})
\right)
\]
where we recall that  $\mathbf{D}_\Upsilon$ is one-step category $(T_0 \stackrel{d}{\rightarrow} T_1)$ and $\zeta_{\mathbf{D}_\Upsilon}$ is its \emph{essential clock} with instants ${T_k}^{\zeta_{\mathbf{D}_\Upsilon}}=\{t_k\}$ (cf. example \ref{exm Upsilon}), and with
\begin{itemize}
\item $\forall k\in\{0,1\}$, ${(T_k)}^\mathbf{u}=\{t_k\}\times \{0,1\}^3$, 
\item $M_\mathbb{U}=\{0,1\}$,
\item $\forall \mu\in M_\mathbb{U}$,
 $\forall k\in\{0,1\}$, $({Id_{(T_k)}})^\mathbf{u}_\mu=Id_{({(T_k)}^\mathbf{u})}$ (since ${\mathbf{u}}$ is functorial), 
\item for $\mu\in M_\mathbb{U}$ and $(a,b,c)\in\{0,1\}^3$, $d^\mathbf{u}_\mu(t_0;a,b,c)$ is the set of all states of the form $(t_1;a',b',c')\in (T_1)^\mathbf{u}$ such that
\subitem if ($\mu=0$ and $a=0$ and $(b,c)\neq(0,0)$) then $a'=0$,
\subitem if ($\mu=0$ and $(a,b,c)=(0,0,0)$) then ($a'=0$ and ($b'=1$ or $c'=1$)),
\subitem if ($\mu=0$ and $(a,b,c)=(1,0,0)$) then ($b'=1$ or $c'=1$),
\subitem if ($\mu=1$ and $a=0$) then $a'=1$,
\item  $\mathbf{u} \stackrel{!}{\looparrowright} \zeta_{\mathbf{D}_\Upsilon}$ is the  unique possible deterministic dynamorphism, for which the date of $(t_k;a,b,c)$ is $t_k$.
\end{itemize}
\end{example}

\section*{Conclusion}\label{sec conclusion}\addcontentsline{toc}{section}{Conclusion}

This article presents the basics of our theory of interacting open dynamics, but various important questions are not addressed at all. In particular, while we have presented the connectivity structures of interactions, a subsequent article will have to take up the theme of the connectivity structures of the dynamics themselves (theme that we had addressed in \cite{Dugowson:201112} and \cite{Dugowson:201203} in the case of mono-dynamics), in order to clarify the relationships between the connectivity structures of the dynamics of an interactive family, the connectivity structure of the interaction and the connectivity structure of the global dynamic generated by such a family.

In addition, the study of the relationships between our theory and Andrée Ehresmann's suggests addressing some ideas that are currently absent from our theory, in particular the question of how an interactive family can continue to produce a global dynamic when certain dynamics of this family cease to function, or when new dynamics enter the dance. More generally, we hope to address the question of self-organization, which is currently largely absent from our theory. On the other hand, as we have seen, the study of the relationships between the two theories suggests a non-deterministic extension of Andrée Ehresmann's guidable systems, which should be studied. 
Furthermore, it would be interesting to clarify the relation with other ``compositional theories" such as David Spivak's dynamical theory.

\paragraph{Acknowledgement.} Special thanks to Mathieu Anel, Spencer Breiner, Olivia Caramello, Jean-Yves Choley, Andrée Ehresmann, René Guitart, Anatole Khélif, Pierre Michel Klein, Marc Lachièze-Rey, René Lew, Marie-Odile Monchicourt, François Nicolas, Alain Rivière, David Spivak, Jean-Jacques Szczeciniarz, Susan Tro\-meur, Marina Ville, Noson S. Yanofsky, Patrick-Iglesias Zemmour. There are many other colleagues, friends and relatives who, in one way or another, have helped me directly or indirectly in this work: may they also be thanked.




\vspace{5mm}
\noindent
Stéphane Dugowson \\
Institut Supérieur de Mécanique de Paris (Supméca) \\
3, rue Fernand Hainaut \\
93407 St Ouen (France) \\
s.dugowson@gmail.com

\newpage
\section*{Contents}
\noindent{Introduction}\\
\mbox{}\quad  {Notations and $2$-categories at stake}\\
  \mbox{}\quad\quad{Functions}\\
   \mbox{}\quad\quad{Categories}\\
   \mbox{}\quad\quad{Transitions}\\
   \mbox{}\quad\quad{The $2$-categories $\mathbf{Tran}$ and $\mathbf{ParF}$}\\
   \mbox{}\quad\quad{Some $2$-categories of sets with $L$-families of transitions as arrows}\\
  
 \noindent{1. Open dynamics}\\
   \mbox{}\quad{1.1 Multi-dynamics}\\
   \mbox{}\quad\quad{1.1.1. $L$-dynamics on a category $\mathbf{D}$}\\
   \mbox{}\quad\quad{1.1.2 The category $\mathbf{MonoDyn}_\mathbf{D}$ of mono-dynamics on $\mathbf{D}$}\\
   \mbox{}\quad\quad{1.1.3. Clocks on $\mathbf{D}$}\\
   \mbox{}\quad\quad{1.1.4. The category $L-\mathbf{Dyn}_\mathbf{D}$ of $L$-dynamics on $\mathbf{D}$}\\
   \mbox{}\quad\quad{1.1.5. The category $\mathbf{MultiDyn}_\mathbf{D}$ of multi-dynamics on $\mathbf{D}$}\\
   \mbox{}\quad\quad{1.1.6. The category $\mathbf{MultiDyn}$ of multi-dynamics}\\
  \mbox{}\quad {1.2. Open dynamics: definition, realizations, quotients}\\
   \mbox{}\quad\quad{1.2.1. Definition of open dynamics}\\
   \mbox{}\quad\quad{1.2.2 Realizations of an open dynamic}\\
   \mbox{}\quad\quad{1.2.3. Parametric quotients}\\
   \mbox{}\quad{1.3. Examples of open dynamics}\\
   \mbox{}\quad{1.4. Some relations with Bastiani (-Ehresmann)'s control systems}\\

 \noindent{2.Interactive families}\\
   \mbox{}\quad{2.1. Binary, multiple and multiple binary relations}\\
   \mbox{}\quad\quad{2.1.1. Binary relations}\\
   \mbox{}\quad\quad{2.1.2. Multiple relations}\\
   \mbox{}\quad\quad{2.1.3. Multiple binary relations}\\
  { \mbox{}\quad\quad 2.1.4. Type conversions between $\mathbf{MBR}_I$, $\mathbf{MR}_{2I}$, $\mathbf{MR}_I$ and $\mathbf{BR}$}\\
 \mbox{}\quad  {2.2. Interaction relations in a family of open dynamics}\\
 \mbox{}\quad  {2.3. Synchronizations}\\
 \mbox{}\quad  {2.4. Interactive families}\\
 \mbox{}\quad  {2.5. Connectivity structures of an interactive family}\\
 \mbox{}\quad  {2.6. Examples of interactive families}\\

 \noindent{3. Global dynamics}\\
  \mbox{}\quad {3.1. The lax-functorial stability theorem}\\
   \mbox{}\quad  {3.2. The transparent global dynamic}\\
  \mbox{}\quad{3.3. The demanded global dynamic}\\
 \mbox{}\quad  {3.4. The responsible global dynamic}\\
 \mbox{}\quad  {3.5. The $J$-global dynamic}\\
 \mbox{}\quad  {3.6. The opaque global dynamic}\\
 \mbox{}\quad  {3.7. Examples of global dynamics}\\
  
 \noindent{Conclusion}\\
 
 \noindent{References}\\
 
 \noindent{Contents}\\

\end{document}